\newcommand \C[1]{{\mathcal #1}}
\newcommand{\Wp}{{W'}}
\newcommand{\ev}{{\mathsf{ev}}}
\newcommand{\elli}{{\mathsf{ell}}}
\newcommand{\temp}{{\mathsf{temp}}}
\newcommand{\res}{{\mathsf{res}}}
\newcommand{\ext}{{\mathsf{ex}}}
\newcommand{\aff}{{\mathsf{aff}}}
\newcommand{\lra}{{\longrightarrow}}
\newcommand{\sfi}{{i}}
\newcommand{\ol}{\overline}
\newcommand \wti[1]{{\widetilde {#1}}}
\newcommand\frg{\mathfrak g}
\newcommand\frh{\mathfrak h}
\newcommand{\Ahat}{\widehat{A}}
\newcommand \bC{{\mathbb C}}
\newcommand \bbC{{\mathbb C}}
\newcommand \bH{{\mathbb H}}
\newcommand \bbH{{\mathbb H}}
\newcommand \bR{{\mathbb R}}
\newcommand \bZ{{\mathbb Z}}
\newcommand\CH{{\C H}}
\newcommand\caH{{\C H}}
\newcommand\CX{{\C X}}
\newcommand\CY{{\C Y}}
\newcommand\caR{{\C R}}
\newcommand\caD{{\C D}}
\newcommand{\EP}{\mathrm{EP}}
\newcommand\ep{{\epsilon}}
\newcommand\om{{\omega}}
\newcommand\al{{\alpha}}
\newtheorem{theorem}{Theorem}[section]
\newtheorem{corollary}[theorem]{Corollary}
\newtheorem{lemma}[theorem]{Lemma}
\newtheorem{proposition}[theorem]{Proposition}
\theoremstyle{definition}
\newtheorem{remark-plain}{Remark}[section]
\newtheorem*{example-plain}{Example}
\newcommand\Hom{\operatorname{Hom}}
\newcommand\End{\operatorname{End}}
\newcommand\im{\operatorname{im}}
\newcommand\triv{\mathsf{triv}}
\newcommand\sgn{\mathsf{sgn}}
\numberwithin{equation}{section}
\begin{document}

\title[characters of springer representations] {characters of springer representations on elliptic
conjugacy classes}

\author{Dan Ciubotaru}
        \address[D. Ciubotaru]{Dept. of Mathematics\\ University of
          Utah\\ Salt Lake City, UT 84112}
        \email{ciubo@math.utah.edu}

\author{Peter E.~Trapa}
        \address[P. Trapa]{Dept. of Mathematics\\ University of
          Utah\\ Salt Lake City, UT 84112}
        \email{ptrapa@math.utah.edu}

\thanks{D.C.~is partially supported by NSF-DMS 0968065 and NSA-AMS
  081022. P.T.~is partially supported by NSF-DMS 0968275. The authors
  thank M. Solleveld for helpful comments.}

\begin{abstract}
For a Weyl group $W$, we give a simple closed formula (valid on
elliptic conjugacy classes) for the character of the representation of
$W$ in each $A$-isotypic component of the full homology of a Springer
fiber.  We also give a formula (valid again on elliptic conjugacy
classes) of the $W$-character of an irreducible discrete series
representation with real central character of a graded affine Hecke
algebra with arbitrary parameters.  In both cases, the Pin double
cover of $W$ and the Dirac operator for graded affine Hecke algebras
play key roles.
\end{abstract}

\maketitle

\section{introduction}\label{s:intro}
Let $W$ denote a Weyl group acting by the reflection representation in a real vector space
$V_0$, and recall than an element of $W$ is called elliptic if it has no
fixed points on $V_0$.  The main result of this paper gives a simple
closed formula, valid on the set of elliptic elements in $W$, for
 the character of the $W$ representation on each $A$-isotypic
component of the full homology
of a Springer fiber.  
The formula depends on two ingredients --- the computation of the
Springer correspondence (in the top degree of homology) and
the character table of a certain double cover of $W$ --- both of which
are known.  In particular, our approach is independent of the 
Lusztig-Shoji algorithm.

In more detail, let $\Phi = (R,X,R^\vee,Y)$ 
be a crystallographic root system as in Section \ref{ss:rs}, and let $W$ denote the corresponding
Weyl group.  Let $e$ denote a nilpotent element in the complex
semisimple Lie algebra $\frg$ attached to the root system $\Phi$, and
let $A(e)$ denote the component group of centralizer in
$\mathrm{Ad}(\frg)$ of $e$.  Then Springer has defined an action of $W
\times A(e)$ on the homology $H_\bullet(\C B_e,\bZ)$ where $\C B_e$
denotes the Springer fiber over $e$.  For a fixed irreducible
representation $\phi$ of $A(e)$, write $\chi_{e,\phi}$ for the
character of the $W$ representation on the $\phi$-isotypic component
of $H_\bullet(\C B_e,\bZ)$, and assume this space is nonzero.  Write
$\sigma(e,\phi)$ for the irreducible representation of $W$ in the top degree
of homology.

Assuming $\chi_{e,\phi}$ is not identically zero on the elliptic set,
  Theorem \ref{t:intro} gives a formula for the value of
  $\chi_{e,\phi}$ on any elliptic element of $W$.
  On the other hand, Reeder \cite{R} has shown that $\chi_{e,\phi}$
  vanishes on the set of elliptic elements in $W$
  precisely when $e$ fails to be a quasidistinguished nilpotent
  element in the sense of \cite[(3.2.2)]{R}. 
        Taken together,
  this completely determines the values on the elliptic set of any
  character of the form $\chi_{e,\phi}$.

To state our formula, we first fix a choice of positive roots $R^+$ and
a $W$-invariant inner product $\langle~,~\rangle$ on $V_0 = X
\otimes_\bZ \bR$.  The group $\mathsf{Pin}(V_0)$ is a subgroup of
units in the Clifford algebra $C(V_0)$, and maps surjectively onto the
orthogonal group $\mathsf{O}(V_0)= \mathsf{O}(V_0,\langle~,~\rangle)$
with kernel of order two.  Since $W$ acts by orthogonal transformation
on $V_0$, we can consider its preimage $\wti W$ in $\mathsf{Pin}(V_0)$.
Write $p$ for the projection from $\wti W$ to $W$.

For the purposes of the introduction, assume $\dim(V_0)$ is odd.  (The
even case is virtually identical, but introduces some notation we
prefer to avoid here.)  Then, up to isomorphism, $C(V_0)$ has exactly
two inequivalent complex simple modules which we denote $S^+$ and
$S^-$.  Both remain irreducible when restricted to $\wti W$ and are
genuine in the sense that they do not factor to $W$.  Write
$\chi_{S^+}$ and $\chi_{S^-}$ for their characters.  Propositions
\ref{p:s-squared} and \ref{p:wedgepm} below show that (up to sign)
\begin{equation}
\label{e:spm-intro}
\chi_{S^+}(\tilde w) -\chi_{S^-}(\tilde w) = \det(1-p(\tilde w))^{1/2},
\end{equation}
and hence is nonzero if and only if $p(\tilde w)$ is elliptic.
The final ingredient we need
is the Casimir element
\[
\Omega_{\wti W} 
= \frac14 \sum_{\substack{\al,\beta>0\\s_\al(\beta)<0}}
\frac{|\alpha^\vee|}{|\alpha|} \frac{|\beta^\vee|}{|\beta|} \alpha \beta \in C(V_0).
\]
In fact $\Omega_{\wti W}$ 
is a central element in $\bC[\wti W]$,
and hence acts by a scalar in any irreducible representation of $\wti W$.
Our main result, proved at the end of
Section \ref{ss:equal} below, is as follows.

\begin{theorem}
\label{t:intro}
Suppose $\chi_{e,\phi}$ is not identically zero on the set of elliptic
elements of $W$.   Fix
$w \in W$ elliptic and choose $\tilde w$ in $\wti W$ such that
$p(\tilde w) = w$.  Then there exist two distinct  irreducible genuine
representation $\tilde \sigma^+$ and $\tilde \sigma^- = \tilde
\sigma^+ \otimes \sgn$ of $\wti W$ such that
\begin{equation}
\label{e:intro}
\chi_{e,\phi}(w) = \frac{\chi_{\tilde \sigma^+}(\tilde w)-\chi_{\tilde \sigma^-}(\tilde w)}
{\chi_{S^+}(\tilde w) -\chi_{S^-}(\tilde w)}
\end{equation}
where $\chi_{\tilde \sigma^{\pm}}$ denotes the 
character of $\tilde \sigma^\pm$.  Up to tensoring with $\sgn$, $\tilde \sigma^+$ is
characterized as the unique (multiplicity one) constituent of
\[
\sigma(e,\phi) \otimes S^+
\]
for which the scalar by which $\Omega_{\wti W}$ acts is as small as possible.
\end{theorem}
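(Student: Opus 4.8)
The plan is to connect the $W$-representation $\chi_{e,\phi}$ to a genuine representation of $\wti W$ via tensoring with the spin module, and to identify the relevant genuine representation using the Dirac operator for graded affine Hecke algebras. First I would recall that the Springer representation $\sigma(e,\phi)$ is a discrete series (or more precisely, appears in the lowest $W$-type of a discrete series) for the graded affine Hecke algebra $\bH$ with the equal-parameter (geometric) choice, at the infinitesimal character determined by $e$ via a Jacobson--Morozov triple $(\ve,\vh,\vf)$. The Dirac operator $D \in \bH \otimes C(V_0)$ has the property that $D^2 = -\Omega_{\bH} \otimes 1 + \Omega_{\wti W}$ (the Parthasarathy-type formula), so on a discrete series module $X$ with lowest $W$-type $\sigma(e,\phi)$, the Dirac cohomology $H_D(X)$ is a nonzero genuine $\wti W$-module; by the work identifying Dirac cohomology, $H_D(X)$ is (up to $\sgn$) an irreducible $\wti W$-representation $\tilde\sigma^+$ occurring with multiplicity one in $\sigma(e,\phi) \otimes S^+$, and the scalar by which $\Omega_{\wti W}$ acts on it equals the value $\langle\vh,\vh\rangle/4$ coming from the infinitesimal character, which is the minimal such scalar among constituents of $\sigma(e,\phi)\otimes S^+$. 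This gives the characterization of $\tilde\sigma^+$ in the statement.

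Next I would establish the character identity \eqref{e:intro}. The key input is an Euler--Poincar\'e-type computation: the full homology $H_\bullet(\C B_e,\bZ)$ with its $W\times A(e)$ action, restricted to the $\phi$-isotypic piece, has a graded $W$-character whose alternating sum (weighted appropriately) relates to $\sigma(e,\phi)$. More precisely, I expect to use that on the elliptic set, $\chi_{e,\phi}$ can be computed as a ratio of $\wti W$-characters after multiplying through by the ``spin factor'' $\chi_{S^+} - \chi_{S^-}$; this reflects the identity $(\sigma(e,\phi)\otimes S^+) - (\sigma(e,\phi) \otimes S^-)$ evaluated on $\tilde w$ equals $\chi_{e,\phi}(w) \cdot (\chi_{S^+}(\tilde w) - \chi_{S^-}(\tilde w))$ up to lower-order genuine constituents --- and the point is that those lower-order constituents \emph{cancel} in the difference, or rather that only $\tilde\sigma^\pm$ survives. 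Here \eqref{e:spm-intro} guarantees the denominator is nonzero exactly on the elliptic set, so the ratio makes sense. The hypothesis that $\chi_{e,\phi}$ is not identically zero on the elliptic set (equivalently, by Reeder, that $e$ is quasidistinguished) is what ensures $H_D(X)\neq 0$ and hence that $\tilde\sigma^+$ genuinely exists.

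I would organize the argument in the order: (1) recall the graded Hecke algebra $\bH$, its discrete series at the geometric parameter, and Reeder's identification of $\sigma(e,\phi)$ with a lowest $W$-type; (2) recall the Dirac operator, the formula $D^2 = -\Omega_{\bH}\otimes 1 + \Omega_{\wti W}$, and the resulting description of $H_D(X)$ as a genuine $\wti W$-module; (3) prove that $H_D(X)$ is irreducible up to $\sgn$, occurs with multiplicity one in $\sigma(e,\phi)\otimes S^+$, and carries the minimal $\Omega_{\wti W}$-scalar; (4) deduce the character formula on the elliptic set by combining the multiplicity-one statement with the spin-factor identity \eqref{e:spm-intro} and a vanishing/cancellation argument for the non-genuine (or higher $\Omega_{\wti W}$) contributions. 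The main obstacle I anticipate is step (4): showing that, on elliptic elements, the ``error terms'' --- the constituents of $\sigma(e,\phi)\otimes S^\pm$ other than $\tilde\sigma^\pm$ --- do not contribute to the character ratio. This should follow from the fact that any such extra genuine constituent $\tau$ would have to appear in the Dirac cohomology of some other module in the same block and have $\Omega_{\wti W}$-scalar strictly larger, combined with the index-theoretic vanishing of $D$'s contribution off the Dirac-cohomology-supporting part; making this precise --- most cleanly via the Euler--Poincar\'e pairing on the block and the fact that the Dirac index is supported on elliptic classes --- is the technical heart of the proof.
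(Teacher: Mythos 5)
Your outline captures the correct main ingredients --- the spin modules $S^\pm$, the Casimir $\Omega_{\wti W}$, the formula $\caD^2 = -\Omega_\bH\otimes 1 + \rho(\Omega_{\wti W})$, and the Euler--Poincar\'e pairing as the mechanism controlling the index --- and it is close in spirit to the paper's argument. But there are two substantive gaps.

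First, you premise the argument on $X(e,\phi)$ being a discrete series (``$\sigma(e,\phi)$ is a discrete series, or appears in the lowest $W$-type of a discrete series''). This is false in general under the theorem's hypothesis: quasidistinguished $e$ is strictly weaker than distinguished $e$, and for quasidistinguished but non-distinguished $e$ the tempered module $X(e,\phi)$ is \emph{not} a discrete series. This matters because the clean statement $\mathrm{EP}(\CX,\CX)=1$ (Schneider--Stuhler, Opdam--Solleveld) is available off the shelf only for discrete series. Extending the equality $(I(X),I(X))_{\wti W}=2$ to the full quasidistinguished tempered case is precisely the content of Theorem \ref{t:diffeq} in the paper, and it requires Reeder's identification of $\ol\caR_e(W)$ with $\ol\caR_0(A(e))$ together with an empirical, case-by-case check that $\ol\caR_0(A(e))$ is one-dimensional in the quasidistinguished non-distinguished cases. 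Your sketch contains no substitute for this step, so as written the argument only proves Theorem \ref{t:intro2} (the honest discrete series case) rather than Theorem \ref{t:intro}.

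Second, your step (4) is the right instinct but stated too loosely to close the argument. The clean mechanism in the paper is not a cancellation among constituents of $\sigma(e,\phi)\otimes S^\pm$ on elliptic elements; rather, one first proves the structural identity $I(X)=\res_{\Wp}(X)\otimes(S^+-S^-)=\tilde\sigma^+-\tilde\sigma^-$ in $\caR_g(\wti\Wp)$, which follows purely from $(I(X),I(X))_{\wti\Wp}=2$ together with the observation that tensoring by $\sgn$ (odd case) or the action of $\wti W/\wti\Wp$ (even case) negates $I(X)$. Only after that does one divide characters by $\chi_{S^+}-\chi_{S^-}$. Note also that what is a difference of two irreducibles is $\res_{\Wp}(X)\otimes(S^+-S^-)$, not $\sigma(e,\phi)\otimes(S^+-S^-)$; relating the two (and thereby locating $\tilde\sigma^\pm$ inside $\sigma(e,\phi)\otimes S^\pm$) uses the unitriangularity of the tempered-to-$W$ matrix and the strict monotonicity $h(e')>h(e)$ for $e'>e$, as in Lemma \ref{l:diraclwt}. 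You gesture at the Casimir-minimality criterion, but you treat it as given rather than as something that must be \emph{derived} from the triangularity and from Proposition \ref{p:d-squared}, and you do not isolate the input from \cite{C} (Lemma \ref{l:c}) that guarantees uniqueness of the constituent with $a(\tilde\sigma)=h(e)$ pairing nontrivially with $S^+-S^-$.
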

\noindent
Note that the denominator in \eqref{e:intro} is nonzero by
\eqref{e:spm-intro}, and also that the quotient in \eqref{e:intro}
is independent of the choice of $\tilde w$ since both $S^\pm$ and
$\tilde \sigma^\pm$ are genuine representations of $\wti W$.  

\medskip

According to the classification of Kazhdan-Lusztig \cite{KL} and 
Lusztig \cite{L,l:cls2}, the
characters $\chi_{e,\phi}$ are the restrictions to $W$ of the
irreducible tempered modules with real central character for a
(graded) affine Hecke algebra $\bH$ with equal parameters.  So Theorem
\ref{t:intro} may be interpreted as a result about the tempered $\bH$ modules
whose $W$ characters do not vanish on the elliptic set.
With this in mind, the idea of the proof of Theorem \ref{t:intro} is as follows.
Fix such a tempered $\bH$ module $X$.  The key object of study for us is
\[
I(X) := X \otimes (S^+ - S^-)
\]
introduced in Section \ref{ss:index}.
On one hand, in Lemma \ref{l:relation},
we related $I(X)$ to the elliptic representation theory developed by Schneider-Stuhler
\cite{SS}, Reeder \cite{R}, and Opdam-Solleveld \cite{OS}.  From this
we deduce that $I(X)$ has the simple form $\tilde \sigma^+ - \tilde \sigma^-$,
and dividing by $\chi_{S^+} - \chi_{S^+}$ leads to \eqref{e:intro}.  On the other
hand, in Proposition \ref{p:index}, we relate $I(X)$ to the index of the Dirac
operator defined in \cite{BCT}.  A formula for the square of the Dirac operator
then imposes strict limitations on the possibilities for $\tilde \sigma^\pm$. 
Together with \cite{C}, they lead to the explicit description of $\tilde \sigma^+$
given in the theorem.

In the setting of arbitrary parameters, 
our argument leads to the following result.  It bears a strong formal
resemblance to Harish-Chandra's character formulas for the discrete series
of a semisimple Lie group.

\begin{theorem}
\label{t:intro2}
Let $\bH$ be a graded affine Hecke algebra attached to the root system
$\Phi$ and arbitrary real positive parameters as defined in Section
\ref{ss:daha}.  Let $X$ be an irreducible discrete series module for
$\bH$, and write $\chi_{X}$ for the character of the $W$
representation afforded by $X$.  
Fix $w \in W$ elliptic
and choose $\tilde w$ such that $p(\tilde w) = w$.  Then there exist
two inequivalent genuine irreducible representation $\tilde \sigma^+$
and $\tilde \sigma^- = \tilde \sigma^+ \otimes \sgn$ of $\wti W$ such
that
\begin{equation}
\label{e:intro2}
\chi_{X}(w) = \frac{\chi_{\tilde \sigma^+}(\tilde w)-\chi_{\tilde \sigma^-}(\tilde w)}
{\chi_{S^+}(\tilde w) -\chi_{S^-}(\tilde w)}
\end{equation}
\end{theorem}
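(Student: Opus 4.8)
The plan is to mirror the argument sketched in the introduction for Theorem \ref{t:intro}, dropping the equal-parameters hypothesis but restricting attention to discrete series rather than all tempered modules. The central object is again $I(X) = X \otimes (S^+ - S^-)$, regarded as a virtual genuine $\wti W$-representation. First I would invoke Proposition \ref{p:index}, which identifies $I(X)$ (up to sign) with the index of the Dirac operator $D$ attached to $X$ in the sense of \cite{BCT}: as a virtual $\wti W$-module, $\ker D - \mathrm{coker}\, D$. The crucial input here is that $X$ is \emph{discrete series}, not merely tempered, with real central character; for such modules the Dirac cohomology is known (via \cite{C}, i.e.\ the Vogan-type classification of Dirac cohomology for graded affine Hecke algebras) to be concentrated in a single $\wti W$-type, so that $I(X) = \pm(\tilde\sigma^+ - \tilde\sigma^-)$ for some genuine irreducible $\tilde\sigma^+$ with $\tilde\sigma^- = \tilde\sigma^+ \otimes \sgn$. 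The scalar by which $\Omega_{\wti W}$ acts on $\tilde\sigma^\pm$ is pinned down by the formula for $D^2$ (the square of the Dirac operator), which equals the $\bH$-Casimir acting via the central character of $X$ minus $\Omega_{\wti W}$; nonvanishing of Dirac cohomology forces this difference to vanish on $\tilde\sigma^\pm$, selecting the relevant $\wti W$-type.

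Next I would translate the identity $I(X) = \pm(\tilde\sigma^+ - \tilde\sigma^-)$ into a character identity on the elliptic set. Evaluating characters at $\tilde w$ with $p(\tilde w) = w$ elliptic gives
\[
\chi_X(w)\bigl(\chi_{S^+}(\tilde w) - \chi_{S^-}(\tilde w)\bigr) = \pm\bigl(\chi_{\tilde\sigma^+}(\tilde w) - \chi_{\tilde\sigma^-}(\tilde w)\bigr),
\]
and by \eqref{e:spm-intro} the factor $\chi_{S^+}(\tilde w) - \chi_{S^-}(\tilde w) = \det(1 - w)^{1/2}$ is nonzero precisely because $w$ is elliptic, so we may divide. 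The sign ambiguity should be resolved by a single normalization computation—e.g.\ comparing dimensions via evaluation of virtual characters, or by pairing against the elliptic inner product of Reeder \cite{R} and Opdam-Solleveld \cite{OS}—which I expect to go through without difficulty and to fix the sign uniformly in $w$. This yields \eqref{e:intro2}. The distinctness of $\tilde\sigma^+$ and $\tilde\sigma^-$ is automatic since they differ by $\sgn$ and are genuine, hence nonzero and inequivalent.

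The step I expect to be the main obstacle is establishing that $I(X)$ really reduces to a \emph{single} difference $\tilde\sigma^+ - \tilde\sigma^-$ in the arbitrary-parameter case—that is, that the Dirac cohomology of a discrete series module is irreducible (up to the $\sgn$ twist) with no cancellation masking a larger virtual combination. In the equal-parameter setting this follows from the geometric description via Springer theory and \cite{R}; for general parameters one must instead rely on the purely algebraic results of \cite{C} on Dirac cohomology, together with the classification of discrete series for $\bH$ with arbitrary parameters. The input needed is precisely that discrete series modules are the ones whose Dirac cohomology is nonzero and, moreover, "small" in the sense that it realizes a single lowest $\wti W$-type; verifying this for all parameter values—and checking that the relevant $\Omega_{\wti W}$-eigenvalue is attained with multiplicity one in $\sigma \otimes S^+$ for the appropriate $\sigma$—is where the real work lies. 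Once that is in hand, the remainder is the formal manipulation above.
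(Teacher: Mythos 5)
Your plan diverges from the paper's at the crucial step, and the divergence creates a genuine gap. You propose to obtain $I(X)=\pm(\tilde\sigma^+-\tilde\sigma^-)$ by first proving that the Dirac cohomology of an arbitrary-parameter discrete series module is concentrated in a single genuine $\wti W$-type, citing \cite{C}. But \cite{C} classifies the relevant $\wti W$-types only in the setting of the Springer correspondence, i.e.\ for equal or geometric parameters; no such Dirac-cohomology concentration statement is available for arbitrary real positive parameters, and you correctly flag this as ``where the real work lies.'' Moreover, even knowing $(I(X),I(X))_{\wti\Wp}=2$ would not by itself give irreducibility of $H_D^\pm(X)$, since common constituents could cancel in the difference $H_D^+(X)-H_D^-(X)$; so the implication you want to use runs in the wrong direction.

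The paper sidesteps Dirac cohomology entirely at this stage. The decomposition $I(X)=\tilde\sigma^+-\tilde\sigma^-$ is Theorem~\ref{t:diff}, proved by a parameter-independent homological argument: (i) the Opdam--Solleveld result (Theorem~\ref{t:ss}, valid for all parameters) that $\EP(\CX,\CX)=1$ for an irreducible discrete series $\CX$; (ii) the identity $e_W(\res_W(\CX),\res_W(\CY))=\EP(\CX,\CY)$ of Reeder and Opdam--Solleveld (Theorem~\ref{t:reeder}); and (iii) the elementary isometry $\left(\sfi(\sigma_1),\sfi(\sigma_2)\right)_{\wti\Wp}=2e_W(\sigma_1,\sigma_2)$ of Proposition~\ref{p:isometry}, resting on Propositions~\ref{p:s-squared} and~\ref{p:wedgepm}. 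Chaining these gives $\left(I(X),I(X)\right)_{\wti\Wp}=2$ (Lemma~\ref{l:relation}), which forces $I(X)=a_1\tilde\sigma_1+a_2\tilde\sigma_2$ with $a_i\in\{\pm1\}$ and $\tilde\sigma_1\not\simeq\tilde\sigma_2$; the relation $S^\pm\otimes\sgn=S^\mp$ (odd case) or the interchanging $\wti W/\wti\Wp$-action (even case) then yields $a_1=-a_2$ and the $\sgn$-twist relation between the two constituents. The Dirac operator and Proposition~\ref{p:d-squared} only enter later, in Lemma~\ref{l:diraclwt}, to \emph{identify} $\tilde\sigma^\pm$ explicitly in the equal-parameter Theorem~\ref{t:intro}; Theorem~\ref{t:intro2} does not need them. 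Once $I(X)=\tilde\sigma^+-\tilde\sigma^-$ is in hand, your final step (evaluate characters at $\tilde w$, divide by the nonzero quantity $\chi_{S^+}(\tilde w)-\chi_{S^-}(\tilde w)$) agrees with the paper; note also that the overall sign is absorbed into the labelling of $\tilde\sigma^\pm$, so no separate normalization is required.
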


\medskip
Like the case of equal parameters (Corollary \ref{c:diffeq}), we expect that this result holds for any
irreducible tempered module with real central character
whose $W$ character does not vanish on the elliptic set, and we also
expect that the crystallographic condition can be dropped.
We remark that when the parameters
are of geometric origin (as computed in \cite{l:cls2}), then the results
of \cite{C} again apply to give an explicit description of  $\tilde
\sigma^\pm$ by a procedure similar to the
one given in Theorem \ref{t:intro}.

\begin{example-plain}
\label{ex:intro}
Let $\Phi$ be the root system of type $B_n$, and let $\bH_{n,m}$,
$m>0,$ be the graded affine Hecke algebra for $\Phi$ with parameters
 $c(\al_l)=1,$ for a long
root $\al_l$, and  $c(\al_s)=m$, for a short root $\al_s.$ Fix a partition $\sigma$ of
$n$. We attach to $\sigma$ and $m$ a real central character
$c_m(\sigma)$ (see the procedure described at the beginning of Section
3 in \cite{CKK}, for example). Opdam \cite[Lemma 3.31]{O} showed that when $m\notin \frac
12\bZ$, there exists a unique discrete series $\bH_{n,m}$-module
$\mathsf{ds}_m(\sigma)$ with central character $c_m(\sigma).$ 

Theorem
\ref{t:intro2} gives a simple formula for the $W(B_n)$-character of
$\mathsf{ds}_m(\sigma)$ on the set of elliptic elements.
Recall that the conjugacy classes of elliptic elements in
$W(B_n)$ are in one-to-one correspondence with partitions $\lambda$ of
$n$. More precisely, to every partition $\lambda=(n_1\ge\dots\ge
n_\ell>0)$, one attaches the conjugacy class of  $w_\lambda$, a Coxeter element in
$W(B_{n_1})\times\dots\times W(B_{n_\ell})\subset W(B_n)$. 
Since $W(B_n)=S_n\ltimes (\bZ/2\bZ)^n,$ every $W(B_n)$-type is induced from a character $\chi$ of $(\bZ/2\bZ)^n$ tensored with an irreducible representation of the stabilizer in $S_n$ of $\chi$. Let $(\sigma\times\emptyset)$ denote the $W(B_n)$-type obtained from the trivial character of $(\bZ/2\bZ)^n$ and the $S_n$-type parameterized by $\sigma.$
By \cite[Sections 3.7 and 3.10]{C}, the representation $\wti\sigma^+$ occuring in (\ref{e:intro2}) when $X= \mathsf{ds}_m(\sigma)$ equals  $(\sigma\times\emptyset)\otimes S^+$ (up to tensoring with $\sgn$). 
Then, up to a sign depending only
on $\mathsf{ds}_m(\sigma)$, Theorem \ref{t:intro2} reduces to
\begin{equation*}
\chi_{\mathsf{ds}_m(\sigma)}(w_\lambda)=\chi_{\sigma\times\emptyset}(w_\lambda)=\chi_\sigma^{S_n}((\lambda));
\end{equation*}  
here $\chi_\sigma^{S_n}$ denotes the character of the $S_n$-representation
labelled by the partition $\sigma$, and $(\lambda)$ denotes the
$S_n$-conjugacy class with cycle structure $\lambda$. In particular,
this shows that, up to a sign, the character of
$\mathsf{ds}_m(\sigma)$ on the elliptic set is independent of $m$. The
same result also follows from \cite[Algorithm 3.30]{CKK}.

It is natural to ask if the same ``independence of parameter'' holds for the characters (on the elliptic set) of families of discrete series (in the sense of \cite[Section 3]{O}) for Hecke algebras with unequal parameters attached  to other multiply-laced root systems. Using (\ref{e:intro2}) and the explicit characters  $\wti\sigma^\pm$ given by \cite[Tables 1,2,6,7]{C}, one can easily verify that this is the case for the graded affine Hecke algebras of types $G_2$ and $F_4$ with geometric parameters.
\end{example-plain}

\section{preliminaries}
\label{s:prelim}

\subsection{Root data} 
\label{ss:rs}
Fix a (reduced) root datum
$\Phi=(R\subset X, R^\vee \subset Y)$.  Thus $X$ and $Y$ are
finite-rank lattices, there exists a perfect
$\bZ$-linear pairing denoted $(~,~)$ from
$X\times Y$ to $\bZ$, and a bijection $\alpha \mapsto \alpha^\vee$
from $R$ to $R^\vee$ such that
\[
(\alpha,\alpha^\vee) =2
\]
and so that
\begin{align*}
s_\al(v)&:=v-(v,\al^\vee)\al \\
s_{\al^\vee}(v')&:=v'-(\alpha,v')\al^\vee
\end{align*}
preserve $R$ and $R^\vee$ respectively.  We further assume $R$ 
spans $V_0:=X\otimes_\bZ \bR$.

Let $W$ denote the subgroup of $GL(X)$ generated by $\{s_\al \; | \;
\al \in R\}$. 
Let $\sgn$ denote the character of $W$ obtained by composing the inclusion
$W \subset GL(X)$ with the determinant.  Set $W_\ev = \ker(\sgn)$, the even
subgroup of $W$.  Because of a well-known dichotomy which appears below for
simple modules of the Clifford algebra, we find it convenient to define
\begin{equation}
\label{e:wp}
\Wp = 
\begin{cases}
W & \text{ if $\dim(V_0)$ is odd;}\\
W_\ev & \text{ if $\dim(V_0)$ is even.}
\end{cases}
\end{equation}

Choose a system of positive roots $R^+ \subset R$ and let $\Pi$ denote
the corresponding simple roots in $R^+$.  As usual, write $\al>0$ or
$\al<0$ in place of $\al\in R^+$ or $\al\in (-R^+)$, respectively.

Finally we fix a $W$-invariant inner product $\langle \cdot, \cdot
\rangle$ on $V_0$ and use the same notation for its extension to an
inner product on $V = V_0\otimes_\bR \bC$.  

\subsection{The Clifford algebra and the Pin double cover $\wti W$.}
\label{ss:cliff}
Let $ C(V_0)$ denote the Clifford algebra defined by $V_0$
and $\langle~,~\rangle$. 
Thus  {$C(V_0)$} is the
quotient of the tensor algebra of {$V_0$} by the ideal generated by
$$\om\otimes \om'+\om'\otimes \om+2\langle \om,\om'\rangle,\quad
\om,\om'\in {V_0}.$$
Let $\mathsf{O}{(V_0)}$ 
denote the group of orthogonal transformation of
{$V_0$} with respect to $\langle~,~\rangle$. The action of $-1\in
\mathsf{O}{(V_0)}$ induces a grading
\begin{equation*}
{C(V_0)= C(V_0)_{\ev}+  C(V_0)_{\mathsf{odd}}}.
\end{equation*}
Let $\ep$ be the automorphism of ${C(V_0)}$ which is $+1$ on $
{C(V_0)_{\ev}}$ and $-1$ on ${C(V_0)_{\mathsf{odd}}}$.
Let ${}^t$ be the transpose antiautomorphism of {$C(V_0)$}
characterized by
\begin{equation*}
\om^t=-\om,\ \om\in V_0,\quad (ab)^t=b^ta^t,\ a,b\in {C(V_0)}.
\end{equation*}
The Pin group is defined as
\begin{equation*}\label{pin}
{\mathsf{Pin}(V_0)}=\{a\in  {C(V_0)}\; |\; \ep(a) {V_0} 
a^{-1}\subset
{{V_0},~ a^t=a^{-1}\}}.
\end{equation*}
and we have an exact sequence
\begin{equation*}\label{ses}
1\longrightarrow \{\pm 1\} \longrightarrow
\mathsf{Pin}(V_0)\xrightarrow{\ \ p\ \ } \mathsf{O}(V_0)\longrightarrow 1,
\end{equation*}
where the projection $p$ is given by $p(a)(\om)=\ep(a)\om a^{-1}$.
The appearance of $\ep$ in the definition of $\mathsf{Pin}(V_0)$
insures that $p$ is surjective.

Since $W$ acts by orthogonal
transformations of $V_0$, we may define
\begin{equation}
\label{e:wtilde}
\wti W = p^{-1}(W)
\end{equation}
and
\begin{equation}
\label{e:wptilde}
\wti \Wp = p^{-1}(\Wp)
\end{equation}
where $W'$ is defined in \eqref{e:wp}.

\subsection{Notation for representation rings}
\label{ss:repring}
Given a ring $R$, we let $\caR(R)$ denote the integral Grothendieck group of finite-length left $R$-modules.  In the special case that $R=\bC[G]$, the group algebra
of a finite group $G$, we write $\caR(G)$ instead of $\caR(\bC[G])$.  
As usual, define a bilinear pairing on $\caR(G)$ via
\begin{equation*}
\label{e:bilinear-pairing}
\left(\sigma_1, \sigma_2 \right )_{G} 
= \dim \Hom_{G}(\sigma_1, \sigma_2)
\end{equation*}
for simple $\bC[G]$ modules $\sigma_i$ (and 
extended linearly to $\caR(G)$).  In terms of the characters $\chi_i$ of $\sigma_i$,
\[
\left(\sigma_1, \sigma_2 \right )_{G} = \frac{1}{|G|}\sum_{g\in G} \ol{\chi_1(g)}\chi_2(g).
\]

\subsection{Simple $C(V_0)$ modules and their restrictions to $\wti \Wp$.}
\label{ss:spinor}
The point of this subsection is to define two inequivalent
irreducible representations $(\gamma^\pm, S^\pm)$ of the group
$\wti \Wp$ defined in \eqref{e:wptilde}.

Suppose first that $\dim(V_0) =2r$ is even (so in particular $\wti \Wp$ is a
subgroup of $C(V_0)_\ev$).  Then, up to equivalence,
$C(V_0)$ has a unique simple (complex) module $S$ which remains
irreducible when restricted to $\wti W$.  Its restrictions to
$C(V_0)_\ev$, however, splits into two inequivalent modules, each of
dimension $2^{r-1}$, and each of which restrict irreducibly to the
group $\wti \Wp$.  We denote these two $\wti
\Wp$ representations by $(\gamma^+,S^+)$ and $(\gamma^-, S^-)$.  
By construction the action of $\wti W/ \wti \Wp$ interchanges $S^+$ and $S^-$.
The
notational superscripts $\pm$ are arbitrary, and a choice must be made when
fixing them.

Next suppose $\dim(V_0^\vee) =2r+1$ is odd.  Then, up to equivalence,
there is unique complex simple module $S$ for $C(V_0)_\ev$ which may
be extended naturally in two distinct ways to obtained inequivalent
simple module structures $S^+$ and $S^-$ on the same space $S$.  (The
choice of superscript labels is again arbitrary.)  Each
of these modules remains irreducible when restricted to $\wti W = \wti
\Wp$, and we continue to denote these two $\wti \Wp$ representations by
$(\gamma^+,S^+)$ and $(\gamma^-, S^-)$.  By construction, the action of
$\wti w \in \wti W$ in $S^+$ and $S^-$ differ by tensoring with
$\sgn$,
\begin{equation}
\label{e:signtwist}
\gamma^+(\wti w)s = \sgn(p(\wti w)) \gamma^-(\wti w)s.
\end{equation}

\begin{proposition}
\label{p:s-squared}
Recall the construction of $S^\pm$ given above, and the
notation of \eqref{e:wptilde} and Section \ref{ss:repring}.
Set
\begin{equation}
\label{e:wedgepm}
\wedge^\pm V = \sum_i (-1)^i \wedge^iV,
\end{equation}
regarded as an element of $\caR(\wti \Wp)$.
Then
\begin{equation}
\label{e:s-squared}
\left ( S^+ - S^-\right )^{\otimes 2} = \frac{2}{[W:\Wp]} \wedge^\pm V
\end{equation}
as elements of $\caR(\wti \Wp)$.  (Here $[W:\Wp]$ denotes
the index of $\Wp$ in $W$, which is one or two according to 
whether $\dim(V)$ is odd or even.)
\end{proposition}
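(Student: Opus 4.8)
The plan is to compute the character of $(S^+ - S^-)^{\otimes 2}$ directly on $\wti\Wp$ and to recognize it as a multiple of the character of $\wedge^\pm V$. The starting point is the classical fact that for the full Clifford algebra $C(V_0)$, the spin module (or the sum $S^+ \oplus S^-$ in the odd case, suitably interpreted) satisfies $S \otimes S^* \cong \wedge^\bullet V$ as modules over $C(V_0)\otimes C(V_0)$, and hence the character of $S$ on an element $\wti w$ projecting to $w \in \mathsf{O}(V_0)$ is, up to a normalizing scalar and a sign, a square root of $\det(1 - w)$; more precisely, $|\chi_S(\wti w)|^2 = \det(1-w) = \sum_i (-1)^i \tr(w \mid \wedge^i V)$, which is exactly the value of the virtual character $\wedge^\pm V$ at $w$. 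So the substance of the proposition is bookkeeping: tracking the factor $\tfrac{2}{[W:\Wp]}$ and checking that the \emph{difference} $S^+ - S^-$, rather than a single spin module, is what appears, in both the odd and even cases.

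First I would treat the odd case $\dim V_0 = 2r+1$, where $\Wp = W$ and $[W:\Wp] = 1$, so the claimed identity is $(S^+ - S^-)^{\otimes 2} = 2\,\wedge^\pm V$ in $\caR(\wti W)$. Here $S^+$ and $S^-$ occupy the same underlying space $S$ of dimension $2^r$, with actions related by \eqref{e:signtwist}, $\gamma^+(\wti w) = \sgn(p(\wti w))\,\gamma^-(\wti w)$. Hence $\chi_{S^+}(\wti w) - \chi_{S^-}(\wti w) = (1 - \sgn(w))\chi_{S^+}(\wti w)$, which vanishes on $W_\ev$ and equals $2\chi_{S^+}(\wti w)$ when $\sgn(w) = -1$. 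Squaring, the character of $(S^+ - S^-)^{\otimes 2}$ at $\wti w$ is $0$ on $W_\ev$ and $4\,\chi_{S^+}(\wti w)^2$ off it. On the other side, I must show the character of $2\wedge^\pm V$ agrees: this is where I invoke the Clifford-theoretic square-root identity $\chi_{S^+}(\wti w)\overline{\chi_{S^+}(\wti w)} = \det(1 - w)$ for $w \notin W_\ev$ — here one uses that on the odd orthogonal group an element with $\det = -1$ has $-1$ as an eigenvalue, so $\chi_{S^\pm}$ can be taken real on such elements after a suitable normalization, giving $\chi_{S^+}(\wti w)^2 = \det(1-w)$ — together with the elementary fact $\det(1 - w) = \sum_i(-1)^i\tr(\wedge^i w) = \chi_{\wedge^\pm V}(w)$, and with the vanishing $\det(1-w) = 0$ when $w$ has a $+1$-eigenvalue, which is automatic on $W_\ev \subset \mathsf{SO}(V_0)$ in odd dimension. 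Matching the two sides then gives the factor $2$, not $4$, because $\chi_{\wedge^\pm V}$ is a character of $W$ (pulled back to $\wti W$) while $(\chi_{S^+} - \chi_{S^-})^2$ carries the extra factor coming from the $(1-\sgn)^2 = 2(1-\sgn)$ identity.

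Next the even case $\dim V_0 = 2r$, where $\Wp = W_\ev$, $[W:\Wp] = 2$, and the claim becomes $(S^+ - S^-)^{\otimes 2} = \wedge^\pm V$ in $\caR(\wti\Wp)$. Now $S^+$ and $S^-$ are the two genuine half-spin modules for $\wti\Wp$, each of dimension $2^{r-1}$, obtained by restricting the unique simple $C(V_0)$-module $S$ (of dimension $2^r$) to $C(V_0)_\ev \supset \wti\Wp$. The relevant Clifford identity here is $S \otimes S^* \cong \wedge^\bullet V$ over $C(V_0) \otimes C(V_0)$, which upon restriction to $C(V_0)_\ev$ decomposes the $\bZ/2$-graded pieces: $(S^+ \oplus S^-)\otimes(S^+ \oplus S^-)^*$ carries the full $\wedge^\bullet V$, and separating by the grading gives $(S^+ \otimes (S^+)^*) \oplus (S^- \otimes (S^-)^*) \cong \wedge^{\ev}V$ and $(S^+ \otimes (S^-)^*) \oplus (S^- \otimes (S^+)^*) \cong \wedge^{\mathsf{odd}}V$ as $\wti\Wp$-modules (using that $S^\pm$ are self-dual up to the $\sgn$-twist or genuinely self-dual as appropriate). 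Forming the alternating sum, $(S^+ - S^-)^{\otimes 2} = (S^+)^{\otimes 2} - 2\,S^+\otimes S^- + (S^-)^{\otimes 2}$ matches $\wedge^{\ev}V - \wedge^{\mathsf{odd}}V = \wedge^\pm V$ on the nose, with no extra factor of $2$ — consistent with $\tfrac{2}{[W:\Wp]} = 1$. I would phrase this cleanly at the level of characters: $\chi_{S^+}(\wti w)\overline{\chi_{S^-}(\wti w)}$ contributes to $\wedge^{\mathsf{odd}}$ and the diagonal terms to $\wedge^{\ev}$, which is just the restriction to $C(V_0)_\ev$ of the odd-case computation above.

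The main obstacle I anticipate is the careful parity/duality bookkeeping: establishing the Clifford identity $S \otimes S^* \cong \wedge^\bullet V$ with the correct module structure, pinning down whether $S^\pm$ are genuinely self-dual or self-dual only after a $\sgn$-twist (which affects whether conjugates appear in the pairing), and making sure the $\bZ/2$-grading on $\wedge^\bullet V$ lines up with the decomposition of $S\otimes S^*$ under $C(V_0)_\ev$ in exactly the way that produces $\wedge^\pm V$ rather than $-\wedge^\pm V$. Once that structural identity is in hand, the rest — specializing characters, using $\det(1-w) = \chi_{\wedge^\pm V}(w)$, and tracking the $(1-\sgn)^2 = 2(1-\sgn)$ factor in the odd case — is routine. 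I would also remark that \eqref{e:s-squared} is consistent with \eqref{e:spm-intro}: taking characters and using $\det(1 - p(\wti w)) = \chi_{\wedge^\pm V}(p(\wti w))$ recovers $(\chi_{S^+} - \chi_{S^-})(\wti w)^2 = \tfrac{2}{[W:\Wp]}\det(1 - p(\wti w))$, which is the squared form of \eqref{e:spm-intro} up to the normalization; this gives a useful sanity check on the signs and constants.
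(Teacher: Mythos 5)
Your overall route is the same as the paper's (which simply cites the construction of simple Clifford modules and points to the discussion around Lemma II.6.5 of Borel--Wallach), so there is no clash of strategy. But your write-up does not actually close the gaps it flags, and at least two of the steps you describe as ``routine'' are the ones that fail or need a genuinely different argument.

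First, in the odd case you assert that because $w\notin W_\ev$ has $-1$ as an eigenvalue, ``$\chi_{S^\pm}$ can be taken real on such elements after a suitable normalization, giving $\chi_{S^+}(\wti w)^2 = \det(1-w)$.'' There is no such normalization: $\chi_{S^+}(\wti w)$ is a fixed algebraic number, not something one can rotate to the real line pointwise, and it is frequently nonreal. Concretely, if $-1\in W$ and $z=\omega_1\cdots\omega_{\dim V_0}$ is the lift of $-1$ to $\wti W$, then $z$ is central in $C(V_0)$ when $\dim V_0$ is odd, acts on $S^\pm$ by a scalar $\lambda^{\pm 1}$ with $\lambda^2=z^2$, and with the paper's convention $\omega^2=-\langle\omega,\omega\rangle$ one has $z^2=(-1)^{n(n+1)/2}$; for $\dim V_0\equiv 1\pmod 4$ this is $-1$, so $\lambda=\pm i$ and $\chi_{S^\pm}(z)$ is purely imaginary. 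This is exactly the issue behind your anticipated ``obstacle'' about whether $(S^\pm)^*\cong S^\pm$ or $S^\mp$: it depends on $\dim V_0\pmod 4$, and your sketch never pins it down. Since \eqref{e:s-squared} is an identity of literal tensor squares (no dual), the sign of the left side is sensitive to exactly this, and your argument does not establish it. Second, even granting reality, your constant is off: you quote $|\chi_{S^+}(\wti w)|^2=\det(1-w)$ off $W_\ev$, but combining this with your own identity $(\chi_{S^+}-\chi_{S^-})^2 = (1-\sgn)^2\chi_{S^+}^2 = 2(1-\sgn)\chi_{S^+}^2$ gives $4\det(1-w)$ on the non-even part, not the required $2\det(1-w)$; the correct Clifford-theoretic input in the odd case is $|\chi_{S^+}(\wti w)|^2=\tfrac12\det(1-p(\wti w))$ on elliptic $\wti w$ (check at $z$: $|\chi_{S^+}(z)|^2 = 2^{2r}$ while $\det(1-(-1))=2^{2r+1}$). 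In short, the plan is right in outline, but the ``invoked'' Clifford identity is mis-stated by a factor of $2$, and the parity/duality point you correctly flag is not a cosmetic nuisance but the actual content of the proposition; a complete proof needs to either work with $(S^+-S^-)\otimes(S^+-S^-)^*$ throughout, or track the case distinction $\dim V_0 \pmod 4$ explicitly.
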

\begin{proof} 
This is a direct consequence of the construction of the simple modules
for $C(V_0)$.  See, for example, the discussion around 
\cite[Lemma II.6.5]{BW}.
\end{proof}

\begin{proposition}
\label{p:wedgepm}
Given $w \in W$ let $\det(1 - w)$ denote the determinant of $Id_V - w$
acting on $V$.
Let $\chi_{\wedge^\pm}$ 
denote the character of $\wedge^\pm V$ (defined
in \eqref{e:wedgepm})
regarded now as a virtual representation of $W$.  Then
\[
\chi_{\wedge^\pm}(w) = {\det}(1 -w)
\]
for all $w$ in $W$.
\end{proposition}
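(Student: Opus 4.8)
The plan is to reduce the identity to the standard generating-function formula for elementary symmetric functions. Since $W$ is finite, any $w\in W$ acts semisimply on $V$; let $\lambda_1,\dots,\lambda_n$ (with $n=\dim V$) be its eigenvalues, listed with multiplicity. Then $w$ acts on $\wedge^i V$ with eigenvalues the products $\lambda_{j_1}\cdots\lambda_{j_i}$ over $1\le j_1<\dots<j_i\le n$, so that
\[
\chi_{\wedge^i V}(w)=e_i(\lambda_1,\dots,\lambda_n),
\]
the $i$-th elementary symmetric polynomial in the $\lambda_j$.

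Next I would invoke the identity $\prod_{j=1}^n(1+t\lambda_j)=\sum_{i=0}^n e_i(\lambda_1,\dots,\lambda_n)\,t^i$ and specialize at $t=-1$, obtaining
\[
\chi_{\wedge^\pm}(w)=\sum_{i=0}^n(-1)^i\chi_{\wedge^i V}(w)=\sum_{i=0}^n(-1)^i e_i(\lambda_1,\dots,\lambda_n)=\prod_{j=1}^n(1-\lambda_j).
\]
Finally, because the $\lambda_j$ are exactly the eigenvalues of $w$, the product $\prod_{j=1}^n(1-\lambda_j)$ equals the value at $w$ of the characteristic-polynomial expression $\det(\mathrm{Id}_V-w)$, which is $\det(1-w)$. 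This completes the argument.

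There is essentially no obstacle here; the only point worth a remark is the semisimplicity of $w$, which is immediate from the finiteness of $W$. (Alternatively, one may put $w$ in upper-triangular form over $\bC$ and observe that $\chi_{\wedge^i V}(w)$, being a trace, depends only on the eigenvalues of $w$, so the computation above goes through verbatim.) Note that the identity holds for every $w\in W$, not merely for elliptic elements.
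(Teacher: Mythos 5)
Your argument is correct, and it is the standard computation. The paper itself does not spell out a proof---it simply cites Lemma~2.1.1 of Reeder's paper \cite{R}---so you have supplied the elementary derivation that underlies that reference: diagonalize $w$ (possible since $W$ is finite), identify $\chi_{\wedge^i V}(w)$ with the elementary symmetric polynomial $e_i$ in the eigenvalues, and sum the alternating series to get $\prod_j(1-\lambda_j)=\det(1-w)$. Nothing is missing; the observation that the identity holds for all $w\in W$, not just elliptic ones, is correct and worth keeping.
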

\begin{proof} 
This is \cite[Lemma 2.1.1]{R}.
\end{proof}

\subsection{The affine Hecke algebra}
\label{ss:aha}
In the setting of Section \ref{ss:rs}, let $W_\ext=X\rtimes W$ denote
the extended Weyl group. We denote the elements of $W_\ext$ by $w
\mathsf a^x$, where $w\in W,$ $x\in X$ and $\mathsf a$ is a
symbol. The length function $\ell$ on $W_\ext$ is defined as in
\cite[1.4(a)]{L}:
\begin{equation*}
\ell(w\mathsf a^x)=\sum_{\al\in R^+,w(\al)\in
  R^-}|(x,\check\al)+1|+\sum_{\al\in R^+,w(\al)\in R^+}|(x,\check\al)|.
\end{equation*}
Let $\le$ be the order on $R^\vee$: $\check\al\le\check\beta$ if and
only if $\check\beta-\check\al\in\bZ_{\ge 0}\langle\check\al:
\al\in\Pi\rangle,$ and set $\Pi_m=\{\beta\in R: \check\beta\text{
  minimal for }\le\}.$

Let $Q = \bZ R$ denote the
integral span of $R$. Then $W_\aff := Q \rtimes W$ is a Coxeter group
generated by $S=\{s_\al: \al\in\Pi\}\cup\{s_\al\mathsf a^\al:\al\in
\Pi_m\}.$ Fix an indeterminate $q$ and a function $L:S\to \mathbb N$ such
that $L(s)=L(s')$ whenever $s$ and $s'$ are conjugate by $W_\ext.$ If
$\al\in\Pi$, let $S(\al)$ be the connected component of $\al$ in the
Coxeter graph $(W_\aff,S)$. If $\check\al\in 2Y$, then $S(\al)$ is an
affine diagram of type $C$, and therefore it has a unique nontrivial
automorphism. Let $\wti s_\al$ denote the image of $s_\al$ under this automorphism.

The affine Hecke algebra $\caH
= \caH(\Phi,\mathsf q,L)$ is the complex associative algebra with unit
over
$\bC[\mathsf q,\mathsf q^{-1}]$ with basis $\{N_x \; |
\; x\in W_\ext\}$ subject to the relations:
\[
N_xN_y = N_{xy} \qquad \text{if $\ell(xy) = \ell(x) + \ell(y)$}
\]
and
\[
(N_s -\mathsf q^{L(s)})(N_s+\mathsf q^{L(s)}) = 0 \qquad \text{for $s\in S_\aff$}.
\]
Set $T = \Hom_\mathsf{alg}(X,\bC^\times)$.
The center of $\caH$ is isomorphic to $\bC[T]^W$, the $W$
invariants in the coordinate ring of $T$.  
A version of Schur's Lemma implies that the center acts by scalars
in any irreducible $\CH$ module $\CX$, and hence we can attached a $W$ orbit
in $T$ to $\CX$ called the central character of $\CX$.  
If the orbit consists of real positive valued functions on $X$, then we say that 
$\CX$ has {real} central  character.
 
 In the equal parameter case, the Borel-Casselman equivalence gives
 natural notions of tempered (and discrete series) $\caH$ modules.  In
 the general case, a version of Casselman's weight criterion can be
 formulated to define tempered and discrete series $\caH$ modules.
Opdam has shown that these definitions have the expected analytic
interpretations. See \cite[Section 2]{O} for
a summary.

\subsection{The graded affine Hecke algebra}
\label{ss:daha}
In the setting of Section \ref{ss:rs}, fix a $W$-invariant map $c:R\to
\mathbb N$, and set $c_\alpha = c(\alpha)$.  Let $\mathsf r$ denote an indeterminate. Lusztig's graded affine
Hecke algebra \cite{L} $\bH=\bH(\Phi,\mathsf r,c)$ attached to the root datum
$\Phi$ and with parameter function $c$ is the complex associative
algebra over $\bC[\mathsf r]$ with unit generated by the symbols $\{t_w\; | \; w\in W\}$ and
$\{t_f \; | \; f \in S(V^\vee)$\}, subject to the relations:
\begin{enumerate}
\item[(1)]
The linear map from the group algebra $\bC[W] = \bigoplus_{w \in W} \bC w$ 
to $\bH$ taking $w$ to $t_w$ is an injective map of algebras.

\item[(2)]
The linear map from the symmetric algebra $S(V^\vee)$ to $\bH$ taking
an element $f$ to $t_f$ is an injective map of algebras.
\end{enumerate}
As usual, we  view $\bC[W]$
and $S(V^\vee)$ as subalgebras of $\bH$,
and write $f$ instead of $t_f$ in $\bH$.
The final relation is
\begin{enumerate}
\item[(3)]
\begin{equation*}\label{hecke}
\omega t_{s_\alpha}-t_{s_\alpha} s_\alpha(\omega)= c_\alpha \mathsf r
(\alpha,\omega),\quad \alpha\in \Pi,~ \omega\in V^\vee;
\end{equation*}
here $s_\alpha(\omega)$ is the element of $V^\vee$ obtained by $s_\alpha$ acting on $\omega$.
\end{enumerate}

The center $Z(\bH)$ of $\bH$ is 
$S(V)^W$.  Again a version of Schur's Lemma implies that the center
acts by scalars in any irreducible $\bH$ module $X$, and hence determines
a $W$ orbit in $V$ called the central character of $X$.  If
the $W$ orbit actually lies in $V_0$, then we say $X$ has real central
character. 

Finally, any $\bbH$ module $X$ can be restricted to $\bbC[W]$ to obtain a 
representation of $W$.  This descends to a map
\begin{equation}
\label{e:resdaha}
\res_W\; : \; \caR(\bH) \longrightarrow \caR(W).
\end{equation}

\subsection{Relation between $\caH$ modules and $\bH$ modules.}
\label{ss:relation}
In the setting of Sections \ref{ss:aha} and \ref{ss:daha}, specialize
from now on 
$\mathsf q\in \bR_{>1}$, $\mathsf r=\log q>0,$  and
let $c$ denote the parameter function for $\bH$ defined as follows:
\begin{equation*}
c(\al)=\left\{\begin{matrix} 2L(s_\al), &\al\in\Pi,\check\al\notin
    2Y,\\
L(s_\al)+L(\wti s_\al),&\al\in\Pi,\check\al\in 2Y;\end{matrix}\right.
\end{equation*}
here $s_\al,\wti s_\al, L$ are as in Section \ref{ss:aha}. 
The main results of \cite[Section 10.9]{L} establish an equivalence
between the category of $\caH$ modules $\C R^0(\C H)$ with real
 central character and the category of 
$\bH$ modules with real central character.  We simply collect the
properties of this equivalence we shall need below:
\begin{enumerate}
\item[(a)]
There is a bijection between tempered $\caH$ modules with real central
character and tempered $\bH$ modules with real central character.

\item[(b)] If $\CX$ and $X$ correspond under the equivalence of categories,
  then define 
(with notation as in \eqref{e:resdaha})
\begin{equation}
\label{e:wcorr}
\res_W(\CX) := \res_W(X).
\end{equation}
This extends to a linear map 
\begin{equation}\label{e:resaha}
\res_W:\C R^0(\C H)\to \C R(W).
\end{equation}
\end{enumerate}

\subsection{The Dirac operator}
\label{ss:dirac}
Let $\omega_1, \dots, \omega_r$ denote an orthonormal basis of
$V_o$ (with respect to $\langle~,~\rangle$).  Set 
\[
\wti \omega_i = \omega_i - \frac12 \sum_{\beta > 0} c_\beta (\omega,\beta^\vee)f_{s_\beta} \in \bbH.
\]
Following \cite{BCT}, define
\begin{equation}
\label{e:dirac}
\caD = \sum_i \wti \omega_i \otimes \omega_i \in \bbH \otimes C(V_0).
\end{equation}
Then it is easy to verify that $\caD$ is well-defined independent of
original choice of orthonormal basis.
Given an $\bbH$-module $X$ and a complex simple module $S$ for $C(V_0)$, let $D$ denote the image
of $\caD$
in endomorphisms of $X \otimes S$,
\[
D\in \End_\bbC(X\otimes S).
\]
Then $D$ is called the Dirac operator for $X$ (and $S$).  Define
the Dirac cohomology of $X$ to be
\[
H_D(X) = \ker(D)/(\ker(D) \cap \im(D)).
\]

\begin{proposition}
\label{p:weq}
Let $\rho$ denote the inclusion
\[
\bbC[\wti W] \lra \bbH \otimes C(V_0)
\]
obtained by linearly extending the map
\[
\wti w \mapsto t_{p(\wti w)} \otimes \wti w.
\]
Then
\[
\rho(\wti w) \caD = \sgn(p(\wti w)) \caD
\]
as elements of $\bbH \otimes C(V_0)$.  Thus left multiplication
by $\rho(\wti w)$ defines a representation of $\wti W$ on $H_D(X)$.
\end{proposition}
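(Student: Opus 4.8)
The plan is to verify the identity on the generators $\wti s_\al$ of $\wti W$ and then propagate it through products; the essential structural point is that $\rho$ is a homomorphism of $\bbC$-algebras. This holds because $w\mapsto t_w$ is an algebra map $\bbC[W]\to\bbH$ (relation~(1) of Section~\ref{ss:daha}) and $\wti w\mapsto\wti w$ is the tautological map $\wti W\to C(V_0)^{\times}$, so on group elements $\rho(\wti w_1\wti w_2)=t_{p(\wti w_1)p(\wti w_2)}\otimes\wti w_1\wti w_2=\rho(\wti w_1)\rho(\wti w_2)$; in particular each $\rho(\wti w)$ is invertible, with $\rho(\wti w)^{-1}=\rho(\wti w^{-1})$. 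I will prove the relation in the form
\[
\rho(\wti w)\,\caD \;=\; \sgn(p(\wti w))\,\caD\,\rho(\wti w),
\]
equivalently $\rho(\wti w)\,\caD\,\rho(\wti w)^{-1}=\sgn(p(\wti w))\,\caD$ --- this is the precise form of the relation, and the one needed for the $\wti W$-action on $H_D(X)$.

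Next I reduce to simple reflections. The group $\wti W$ is generated by the lifts $\wti s_\al:=\al/|\al|\in V_0\subset C(V_0)$ of the simple reflections, $\al\in\Pi$: indeed $p(\wti s_\al)=s_\al$, and $(\wti s_\al)^2=-\langle\al/|\al|,\al/|\al|\rangle=-1=z$ by the Clifford relation, so $\langle\wti s_\al:\al\in\Pi\rangle$ contains the nontrivial kernel element $z$ of $p$ and surjects onto $W$. Since $\rho$ is multiplicative and $\sgn\circ p$ is a homomorphism $\wti W\to\{\pm1\}$, it suffices to prove, for each $\al\in\Pi$, that $\rho(\wti s_\al)$ anticommutes with $\caD$,
\[
\rho(\wti s_\al)\,\caD \;=\; -\,\caD\,\rho(\wti s_\al),
\]
since $\sgn(s_\al)=-1$. (For $z$ this then gives $\rho(z)\caD\rho(z)^{-1}=\caD=\sgn(1)\caD$ automatically, as $\rho(z)=-1$.)

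For the core computation, fix $\al\in\Pi$ and choose an orthonormal basis $\omega_1,\dots,\omega_r$ of $V_0$ with $\omega_1=\al/|\al|$, so that $s_\al\omega_1=-\omega_1$, $s_\al\omega_i=\omega_i$ for $i\ge2$, and $\rho(\wti s_\al)=t_{s_\al}\otimes\omega_1$. Writing $\rho(\wti s_\al)\caD=\sum_i t_{s_\al}\wti\omega_i\otimes\omega_1\omega_i$ and $\caD\rho(\wti s_\al)=\sum_i\wti\omega_i t_{s_\al}\otimes\omega_i\omega_1$, and using $\omega_1\omega_i=-\omega_i\omega_1$ for $i\ge2$ together with $\omega_1^2=-1$, the desired anticommutation reduces to the two identities in $\bbH$
\[
t_{s_\al}\,\wti\omega_i=\wti\omega_i\,t_{s_\al}\ \ (i\ge2),\qquad t_{s_\al}\,\wti\omega_1=-\,\wti\omega_1\,t_{s_\al}.
\]
To establish these, substitute $\wti\omega=\omega-\tfrac12\sum_{\beta>0}c_\beta(\omega,\beta^\vee)t_{s_\beta}$, commute $t_{s_\al}$ past the degree-one term using relation~(3), $\omega t_{s_\al}-t_{s_\al}s_\al(\omega)=c_\al\mathsf r(\al,\omega)$, and commute $t_{s_\al}$ past each $t_{s_\beta}$ using $t_{s_\al}t_{s_\beta}t_{s_\al}=t_{s_{s_\al(\beta)}}$ (valid in $\bbC[W]\subset\bbH$). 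Re-indexing the resulting sum over $\beta>0$ by $\gamma=s_\al(\beta)$, which runs through $(R^+\setminus\{\al\})\cup\{-\al\}$, and invoking the $W$-invariance of $c$ and of the pairing, the bulk of the two root sums cancels and leaves only the $\gamma=\pm\al$ contributions: for $i\ge2$ these vanish too (and $(\al,\omega_i)=0$ since $\omega_i\perp\al$), giving commutation, while for $i=1$ they combine with the scalar $c_\al\mathsf r(\al,\omega_1)$ coming from~(3) to produce the minus sign.

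Finally, transporting $\rho(\wti w)\caD=\sgn(p(\wti w))\caD\rho(\wti w)$ into $\End_\bbC(X\otimes S)$ yields $\rho(\wti w)D=\sgn(p(\wti w))D\rho(\wti w)$, so $\rho(\wti w)$ preserves $\ker D$ and $\im D$, hence $\ker D\cap\im D$, and therefore descends to an operator on $H_D(X)=\ker D/(\ker D\cap\im D)$; since $\wti w\mapsto\rho(\wti w)$ is a homomorphism, this defines a representation of $\wti W$ on $H_D(X)$. I expect the one genuinely delicate point to be the cancellation in the core computation: the reduction and all the bookkeeping are formal, but making the positive-root sums cancel requires the normalizations of the pairing $(\cdot,\cdot)$, the inner product used to build $C(V_0)$ and $\caD$, and the coroots to be mutually compatible --- this is exactly the computation carried out in \cite{BCT}, which I would quote rather than reproduce in full.
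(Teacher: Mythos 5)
The paper's proof is a one-line citation to \cite[Lemma 3.4]{BCT}; your proposal reconstructs that argument and is correct in all essential respects. Two points are worth flagging. First, you correctly interpret the displayed identity as the conjugation relation
\[
\rho(\wti w)\,\caD \;=\; \sgn(p(\wti w))\,\caD\,\rho(\wti w),
\]
i.e.\ you supply the factor $\rho(\wti w)$ that is missing on the right-hand side of the statement as printed (taken literally, $\rho(\wti w)\caD=\pm\caD$ would be false and would not yield an action on $H_D(X)$); this is what \cite{BCT} actually proves and what the final sentence of the proposition requires. Second, your reduction is clean: $\wti W$ is generated by the elements $\wti s_\al=\al/|\al|$ together with $z=(\wti s_\al)^2=-1$, $\rho$ is a group homomorphism because $w\mapsto t_w$ is an algebra embedding and $\wti w\mapsto\wti w$ is tautological, and picking an orthonormal basis with $\omega_1=\al/|\al|$ reduces the claim exactly to $t_{s_\al}\wti\omega_i=\wti\omega_i t_{s_\al}$ for $i\ge 2$ and $t_{s_\al}\wti\omega_1=-\wti\omega_1 t_{s_\al}$, which together are precisely the $W$-equivariance statement $t_{s_\al}\wti\omega\, t_{s_\al}^{-1}=\wti{s_\al(\omega)}$. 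You are right that the only delicate step is the cancellation establishing this equivariance, where the normalizations of $(\cdot,\cdot)$, $\langle\cdot,\cdot\rangle$, the coroots, the $W$-invariance of $c$, and the factor $\mathsf r$ in relation (3) must all be made compatible; deferring that verification to \cite{BCT} is entirely reasonable since it is the content of the cited lemma, and is in any case what the paper itself does. The final passage from the algebraic identity in $\bbH\otimes C(V_0)$ to the preservation of $\ker D$, $\im D$, and hence $H_D(X)$ is also correct.
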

\begin{proof}
This is \cite[Lemma 3.4]{BCT}.
\end{proof}

Consider the Casimir element
\[
\Omega_{\wti W} 
= \frac14 \sum_{\stackrel{\alpha, \beta >0}{s_\alpha(\beta)>0}} c_\alpha c_\beta 
\frac{|\alpha^\vee|}{|\alpha|} \frac{|\beta^\vee|}{|\beta|} \alpha \beta \in C(V_0).
\]
In fact $\Omega_{\wti W}$ is an element of $\bC[\wti W]$, and it is
central in $\bC[\wti W]$; e.g.~\cite[Section 3.4]{BCT}.  
Given an irreducible representation $\tilde \sigma$ of $\wti W$, let 
\begin{equation}
\label{e:a-omega}
a(\tilde \sigma) = \text{the scalar by which $\Omega_{\wti W}$ acts in $\tilde \sigma$.}
\end{equation}

\begin{proposition}
\label{p:d-squared}
Suppose $X$ is an irreducible $\bH$ module with central character represented by $\nu \in V$.
If $\tilde \sigma$ is an irreducible representation of $\wti W$ such that
\[
\Hom_{\wti W}\left (\sigma, H_D(X) \right ) \neq 0,
\]
then
\[
\langle \nu,\nu \rangle = a(\tilde \sigma),
\]
with notation as in \eqref{e:a-omega}.
\end{proposition}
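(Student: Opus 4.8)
The plan is to compute the square of the Dirac element $\caD$ inside $\bbH \otimes C(V_0)$ and show that it equals a central element of $\bbH$ (namely a Casimir-type operator built from $S(V)^W$, which acts on $X$ by $\langle \nu, \nu\rangle$) minus the image $\rho(\Omega_{\wti W})$ of the $\wti W$-Casimir. Concretely, the key algebraic identity should be of the form
\begin{equation*}
\caD^2 = -\Omega_{\bbH} \otimes 1 + \rho(\Omega_{\wti W}),
\end{equation*}
where $\Omega_{\bbH} \in Z(\bbH)$ is the Casimir element of the graded Hecke algebra (the image of $\sum_i \omega_i^2 \in S(V)$ up to lower-order corrections), and $\rho$ is the embedding of Proposition \ref{p:weq}. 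This is exactly the graded-Hecke-algebra analogue of the Parthasarathy/Vogan--Zuckerman formula $D^2 = -\mathrm{Cas}_{\frg} + \mathrm{Cas}_{\wti K} + \text{const}$, and it is established in \cite{BCT}; I would cite it rather than re-derive it, but the sketch of the derivation is: expand $\caD^2 = \sum_{i,j} \wti\omega_i \wti\omega_j \otimes \omega_i \omega_j$, split the $C(V_0)$-part into its symmetric part ($\omega_i\omega_j + \omega_j\omega_i = -2\delta_{ij}$) and antisymmetric part, and use relation (3) of Section \ref{ss:daha} to commute the $\wti\omega$'s past the reflections; the symmetric part assembles into $-\sum_i \wti\omega_i^2$, which differs from the honest central Casimir $\Omega_{\bbH}$ by a correction that recombines with the antisymmetric part to give precisely $\rho(\Omega_{\wti W})$.

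Granting the identity, the argument is short. Suppose $\tilde\sigma$ is an irreducible $\wti W$-representation with $\Hom_{\wti W}(\tilde\sigma, H_D(X)) \neq 0$. Then there is a vector $v \in \ker(D)$, not in $\im(D) \cap \ker(D)$, generating a copy of $\tilde\sigma$ under the $\rho(\wti W)$-action of Proposition \ref{p:weq} (here one must check that $H_D(X)$ actually inherits a genuine $\wti W$-action and that we may lift the abstract Hom to such a vector; since $D$ commutes with $\rho(\wti W)$ up to the sign character, $\ker D$ and $\im D$ are $\wti W$-stable after the sign twist, so this is routine). Apply $D^2$ to $v$: on one hand $D^2 v = D(Dv) = 0$; on the other hand, by the identity, $D^2 v = -\Omega_{\bbH} v + \rho(\Omega_{\wti W}) v$. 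Now $\Omega_{\bbH} \in Z(\bbH)$ acts on the irreducible module $X$ by the scalar $\langle\nu,\nu\rangle$ (this is the statement that the central character, represented by $\nu \in V$, evaluates the Casimir at $\langle\nu,\nu\rangle$; it follows from the description $Z(\bbH) = S(V)^W$ together with the fact that $\sum_i \omega_i^2$ is the degree-two $W$-invariant whose value on the orbit of $\nu$ is $\langle\nu,\nu\rangle$). And $\rho(\Omega_{\wti W})$ acts on the $\tilde\sigma$-isotypic vector $v$ by $a(\tilde\sigma)$, since $\Omega_{\wti W}$ is central in $\bbC[\wti W]$. Hence $0 = -\langle\nu,\nu\rangle \, v + a(\tilde\sigma)\, v$, and since $v \neq 0$ in $X \otimes S$ we conclude $\langle\nu,\nu\rangle = a(\tilde\sigma)$.

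The main obstacle is the bookkeeping behind the $\caD^2$ formula: one has to track the correction terms coming from relation (3) carefully to see that they reassemble into exactly $\rho(\Omega_{\wti W})$ with the correct normalization of the $|\alpha^\vee|/|\alpha|$ factors, and that no residual first-order term survives. Since this is precisely \cite[]{BCT}, in the write-up I would simply invoke that reference for the identity $\caD^2 = -\Omega_{\bbH}\otimes 1 + \rho(\Omega_{\wti W})$ and spend the proof on the (genuinely easy) deduction above. A minor secondary point to handle cleanly is the passage from $\Hom_{\wti W}(\tilde\sigma, H_D(X)) \neq 0$ to the existence of an honest eigenvector-type element on which both $\Omega_{\bbH}$ and $\rho(\Omega_{\wti W})$ act by the claimed scalars; this is immediate once one notes $H_D(X)$ is a genuine finite-dimensional $\wti W$-module and picks any nonzero vector in its $\tilde\sigma$-isotypic component, lifted to $\ker D$.
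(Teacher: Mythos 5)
Your proposal is correct and follows essentially the same route as the paper: cite the $\caD^2$ identity from \cite{BCT}, apply it to a nonzero $\tilde\sigma$-isotypic vector in $\ker D$, and compare scalars. The extra remarks about deriving the identity and about lifting from $H_D(X)$ to $\ker D$ are harmless elaborations of what the paper leaves implicit.
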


\begin{proof}
Theorem 3.5 of \cite{BCT} shows that
\begin{equation}
\label{e:d-squared}
\caD^2 = -\Omega_\bH \otimes 1 + \rho\left( \Omega_{\wti W} \right)
\end{equation}
where $\Omega_\bH$ is a central element of $\bH$ which acts by the squared
length of the central character in any irreducible $\bH$ module.  Hence
if $v\neq 0$ is in the $\tilde \sigma$ isotypic component of the kernel of $\caD$ acting
on $X \otimes S$, then applying both sides of \eqref{e:d-squared} to $v$ gives
\[
0 = -\langle \nu,\nu \rangle + a(\tilde \sigma).
\]
This proves the proposition.
\end{proof}

\subsection{The Dirac index}
\label{ss:index}
Retain the setting of Section \ref{ss:daha}, and
recall the irreducible $\wti \Wp$ modules $S^\pm$
introduced in Section \ref{ss:spinor}.  Define the Dirac index 
\[
I \; : \; \caR(\bH) \lra \caR(\wti \Wp)
\]
as
\begin{equation}
\label{e:index}
I(X) = \res_{\Wp}(X) \otimes (S^+ - S^-);
\end{equation}
here $\res_{\Wp}(X)$ denote the restriction of $X$ to $\Wp$ pulled back to 
an element of $\caR(\wti \Wp)$.
The remainder of this section will be
devoted to explaining the relationship between $I(X)$ and $H_D(X)$.

Suppose first that $\dim(V_0)$ is even, and let $(\gamma,S)$ denote the
unique complex simple $C(V_0)$ module up to equivalence.  For a fixed
$\bbH$ module $X$, let $D$ denote the Dirac operator defined in the
previous section.
As remarked
in Section \ref{ss:spinor}, the restriction of $S$ to $C(V_0)_\ev$
splits into simple modules as $S = S^+ \oplus S^-$.  Thus for $s \in S^\pm$
and $v \in V_0 \subset C(V_0)_\mathsf{odd}$, $\gamma(v)s \in S^\mp$.  Hence
$D$ maps $X \otimes S^+$ to $X
\otimes S^-$, and $X \otimes S^-$ to $X
\otimes S^+$.  
We let $D^+$ and $D^-$ denote the respective restrictions,
\[
D^\pm \; : \; X\otimes S^\pm \lra X\otimes S^\mp,
\]
and set
\begin{equation}
\label{e:hdpm}
H_D^\pm(X) = \ker(D^\pm)/(\ker(D^\pm) \cap \im(D^\mp).
\end{equation}
According to Proposition \ref{p:weq},
\begin{equation}
\label{e:hdpm1}
H_D^\pm(X) \in \caR(\wti \Wp).
\end{equation}

Next suppose $\dim(V_0)$ is odd, and recall the two simple
$C(V_0)$ module structures  $S^+$ and $S^-$ (on the same complex
vector space).  Fix an $\bbH$ module $X$ and define
\[
D \; : \; X \otimes S^+ \lra X\otimes S^+.
\]
We can compose this with the vector space identity map $S^+ \to S^-$
to obtain
\[
D^+ \; : \; X \otimes S^+ \lra X\otimes S^-.
\]
Reversing the roles of $S^+$ and $S-$ we obtain
\[
D^- \; : \; X \otimes S^- \lra X\otimes S^+.
\]
We then define $H^\pm_D(X)$ via \eqref{e:hdpm}.  According
to \eqref{e:signtwist} and Proposition \ref{p:weq}, we once again have
\begin{equation}
\label{e:hdpm2}
H_D^\pm(X) \in \caR(\wti \Wp).
\end{equation}

\begin{proposition}
\label{p:index}
Fix an $\bbH$ module $X$, and recall the notation of
\eqref{e:index}, \eqref{e:hdpm1}, and \eqref{e:hdpm2}.  Then
\begin{equation}
\label{e:indexformula}
I(X) = H_D^+(X) - H^-_D(X) 
\end{equation}
as elements of $\caR(\wti \Wp)$.
\end{proposition}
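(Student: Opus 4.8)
The plan is to reduce the identity \eqref{e:indexformula} to an elementary statement about alternating sums of kernels and images of an operator $D$ that squares (essentially) to a self-adjoint operator, and then to identify the resulting Euler characteristic with $I(X)$ using the splitting $S = S^+ \oplus S^-$ (in the even case) or the sign-twist relation \eqref{e:signtwist} (in the odd case). First I would recall the general linear-algebra fact: if $D^+ : U \to V$ and $D^- : V \to U$ are linear maps between finite-dimensional spaces such that $D^- D^+$ and $D^+ D^-$ are semisimple (diagonalizable), then $U - V = H_D^+ - H_D^-$ in the Grothendieck group, where $H_D^\pm$ are as in \eqref{e:hdpm}. This is because on each nonzero eigenspace of $D^-D^+$ the map $D^+$ is an isomorphism onto the corresponding eigenspace of $D^+D^-$, so these contributions cancel in $U-V$ as well as being killed in the cohomology (they lie in $\ker \cap \im$ complement), leaving only the $0$-generalized-eigenspaces, on which the cohomology is exactly the kernel modulo the image. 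The semisimplicity needed here follows from Proposition~\ref{p:d-squared}'s formula \eqref{e:d-squared}: $\caD^2 = -\Omega_\bH \otimes 1 + \rho(\Omega_{\wti W})$ acts on $X \otimes S$ through a finite-dimensional algebra, and on an irreducible $X$ the first term is the scalar $\langle\nu,\nu\rangle$ while $\rho(\Omega_{\wti W})$ is the action of a central (hence semisimple) element of $\bC[\wti W]$; for general finite-length $X$ one passes to composition factors, since $I$, $H_D^\pm$ are all additive in short exact sequences.

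Next I would assemble the two cases. In the even case, $X \otimes S = (X \otimes S^+) \oplus (X \otimes S^-)$ as $\wti\Wp$-modules (Section~\ref{ss:spinor}), and $D$ exchanges the two summands with restrictions $D^\pm$; applying the linear-algebra fact with $U = X \otimes S^+$, $V = X \otimes S^-$ gives
\[
(X \otimes S^+) - (X \otimes S^-) = H_D^+(X) - H_D^-(X)
\]
in $\caR(\wti\Wp)$, and the left-hand side is exactly $\res_{\Wp}(X) \otimes (S^+ - S^-) = I(X)$ by \eqref{e:index}. In the odd case, $S^+$ and $S^-$ are the same vector space with $\wti W$-actions differing by $\sgn \circ p$, per \eqref{e:signtwist}; here $D : X \otimes S^+ \to X \otimes S^+$, and composing with the identity $S^+ \to S^-$ gives $D^+$, reversing gives $D^-$, with $D^- D^+ = D^2$ on $X \otimes S^+$ (up to the harmless vector-space identifications). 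The same linear-algebra fact yields $(X \otimes S^+) - (X \otimes S^-) = H_D^+(X) - H_D^-(X)$ as elements of $\caR(\wti\Wp)$, where now $(X \otimes S^+) - (X \otimes S^-) = \res_{\Wp}(X) \otimes (S^+ - S^-) = I(X)$ by \eqref{e:signtwist} (which guarantees the difference really does land in $\caR(\wti\Wp)$ and equals the Dirac index). Either way, the identity \eqref{e:indexformula} follows.

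I expect the main obstacle to be the bookkeeping of the odd case: because $S^+$ and $S^-$ share an underlying vector space, one must be careful that $D^+$ and $D^-$ as defined in Section~\ref{ss:index} really do assemble into a complex whose Euler characteristic computes $I(X)$, and that the $\wti\Wp$-module structures are tracked correctly through the identity map $S^+ \to S^-$. The key point to verify is that $D^- \circ D^+$ (and $D^+ \circ D^-$) agrees with the operator $\caD^2$ of Proposition~\ref{p:d-squared} under these identifications, so that semisimplicity applies; once that is checked, the cancellation argument is purely formal. A secondary subtlety is reducing from irreducible to finite-length $X$: one should note that all three of $I$, $H_D^+$, $H_D^-$ are well-defined on $\caR(\bH)$ and additive — for $I$ this is immediate from \eqref{e:index}, and for $H_D^\pm$ it follows because taking kernels and images of $D^\pm$ is compatible with short exact sequences of $\bH$-modules in the Grothendieck group (the long exact sequence in Dirac cohomology, or more simply the fact that both $\ker$ and $\mathrm{coker}$ contribute with signs that telescope). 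Hence it suffices to prove \eqref{e:indexformula} for irreducible $X$, where Proposition~\ref{p:d-squared} applies directly.
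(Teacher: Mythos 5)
Your proof is correct and follows essentially the same route as the paper's: deduce diagonalizability of $D^\pm\circ D^\mp$ from the formula \eqref{e:d-squared} for $\caD^2$, and then reduce \eqref{e:indexformula} to the elementary fact that for such a pair of operators the Euler characteristic $[U]-[V]$ equals $[H_D^+]-[H_D^-]$. The paper compresses this to two sentences (implicitly taking $X$ irreducible, as in all its applications); your only imprecision is claiming that $H_D^+$ and $H_D^-$ are \emph{individually} additive in short exact sequences — only the difference $H_D^+ - H_D^-$ is, and that is all the reduction to composition factors requires.
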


\begin{proof}
Equation \eqref{e:d-squared} implies that $D^\pm \circ D^\mp$ are diagonalizable
linear operators.  The proposition then reduces to simple linear algebra.
\end{proof}

Proposition \ref{p:index} explains why $I(X)$ is called the Dirac
index of $X$. Results like Proposition \ref{p:d-squared} place rather strict 
limitations
on the possible structure of $I(X)$, and hence give
nontrivial information about the structure of $X$ as a $\Wp$ module.
In Section \ref{ss:equal}, we use this idea (together with the main results
of \cite{C}) to explicitly identify the numerator in
Theorem \ref{t:intro}.

\section{main results}
\label{s:main}

\subsection{Elliptic representations of $W$}
\label{ss:ell}
An element $w \in W$ is called elliptic if the action of $w$ on $V$
has no fixed points; equivalently (in the notation of Proposition
\ref{p:wedgepm}) if
\[
{\det}(1-w) \neq 0.
\]
The set of elliptic element of $W$ will be denoted $W_\elli$.  
We can make the same definition  for the subgroup $\Wp$ defined
in \eqref{e:wp}.  A quick check (in the even case) shows that
$\Wp_\elli = W_\elli$.

Following \cite[(2.2.1)]{R}, we define a bilinear pairing
on $\caR(W)$ defined on irreducible representations $\sigma_1$ and
$\sigma_2$ via
\[
e_W(\sigma_1,\sigma_2) = 
\sum_i (-1)^i 
\dim \Hom_W\left (\sigma_1\otimes \wedge^{i}V, \sigma_2 \right ).
\]
Then \cite[(2.2.2)]{R} shows that taking characters induces
an isomorphism
\begin{equation}
\label{e:ellisom}
\ol{\caR}(W) := \caR(W)/\ker(e_W)\;  \lra \; \bbC[W_\elli]^W
\end{equation}
onto the class functions on $W$ vanishing off $W_\elli$.  
(More intrinsically, the kernel of $e_W$ consists of the span of
 representations induced from proper parabolic subgroups of $W$.)
Continue to write
$e_W$ for the induced nondegenerate form on $\ol{\caR}(W)$.

\begin{remark-plain}
\label{r:indexvanishing}
By taking characters of both sides of \eqref{e:index} and using \eqref{e:spm-intro} we see
that $I(X) = 0$ if and only if the character of $\res_\Wp(X)$ vanishes on $\Wp_\elli$.
By the parenthetic remark after \eqref{e:ellisom}, this implies that $I(X)$ vanishes
if and only if $X$ is in the span of $\bH$ modules which are induced from
proper parabolic subalgebras of $\bH$.
\end{remark-plain}

In the setting of Section \ref{ss:cliff} and \ref{ss:repring}, 
let $\caR_g(\wti \Wp)$
denote the complex linear combinations of genuine irreducible
representations of $\wti \Wp$.  (As usual, a representation of $\wti
\Wp$ is called genuine is it does not factor to $\Wp$.)  
Recall the (genuine)
$\wti \Wp$ modules $S^\pm$ defined in Section \ref{ss:spinor}.
Consider the map
\begin{equation}
\label{e:tau}
\sfi \: \; : \caR(W) \lra \caR_g(\wti \Wp)
\end{equation}
defined by 
\begin{equation}
\label{e:tau2}
\sigma \mapsto  \res_{\Wp}(\sigma) \otimes (S^+ - S^-);
\end{equation}
where $\res_{\Wp}(\sigma)$ denotes the restriction of $\sigma$ to $\Wp$ pulled back to $\wti \Wp$;
c.f.~\eqref{e:index}.
Proposition \ref{p:s-squared} shows that $\sfi$ vanishes on any
element of $\caR(W)$ whose character has support in the complement
of $W_\elli = \Wp_\elli$.  
Thus \eqref{e:ellisom}
implies that $\sfi$ descends
to an injection
\begin{equation}
\label{e:taubar}
\sfi \: \; : \ol{\caR}(W) \lra \caR_g(\wti \Wp).
\end{equation}

\begin{proposition}
\label{p:isometry}
The map $\sfi$ defined in \eqref{e:taubar} 
satisfies
\[
\left (\sfi(\sigma_1),
\sfi(\sigma_2)\right)_{\wti \Wp}
=2{e}_W(\sigma_1,\sigma_2)
\]
for all $\sigma_i \in \ol{\caR}(\Wp)$.
\end{proposition}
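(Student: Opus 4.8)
The plan is to reduce the claimed identity to a computation of characters and to the formula for $(S^+ - S^-)^{\otimes 2}$ provided by Proposition~\ref{p:s-squared}. First I would write both sides as pairings of class functions on $\wti\Wp$ using the character formula from Section~\ref{ss:repring}. Since $S^{\pm}$ are genuine, the character of $\sfi(\sigma)$ vanishes on $p^{-1}(\Wp \setminus \Wp_\elli)$ and also on the non-identity coset of $\wti\Wp$ over $\Wp$; consequently the inner product $(\sfi(\sigma_1),\sfi(\sigma_2))_{\wti\Wp}$ can be rewritten as an average over the elliptic set. Concretely, using that $\chi_{\sfi(\sigma_i)} = \chi_{\sigma_i}\cdot(\chi_{S^+} - \chi_{S^-})$ as functions on (a lift of) $W$, one gets
\[
\left(\sfi(\sigma_1),\sfi(\sigma_2)\right)_{\wti\Wp}
= \frac{1}{|\wti\Wp|}\sum_{\tilde w \in \wti\Wp} \ol{\chi_{\sigma_1}(p(\tilde w))}\,\chi_{\sigma_2}(p(\tilde w))\,\bigl|\chi_{S^+}(\tilde w) - \chi_{S^-}(\tilde w)\bigr|^2.
\]

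Next I would identify $|\chi_{S^+} - \chi_{S^-}|^2$ with the character of $(S^+ - S^-)^{\otimes 2}$ (the representations $S^{\pm}$ may be taken unitary, or one argues that $\ol{\chi_{S^\pm}(\tilde w)} = \chi_{S^\pm}(\tilde w^{-1})$ and that the displayed expression is already manifestly symmetric). By Proposition~\ref{p:s-squared} this equals $\tfrac{2}{[W:\Wp]}\,\chi_{\wedge^\pm V}$, and by Proposition~\ref{p:wedgepm} that is $\tfrac{2}{[W:\Wp]}\det(1 - p(\tilde w))$. Substituting and passing from the sum over $\wti\Wp$ to a sum over $\Wp$ (which introduces a factor $\tfrac12 |\wti\Wp| = |\Wp|$, since $p$ is two-to-one) yields
\[
\left(\sfi(\sigma_1),\sfi(\sigma_2)\right)_{\wti\Wp}
= \frac{2}{[W:\Wp]}\cdot\frac{1}{|\Wp|}\sum_{w \in \Wp} \ol{\chi_{\sigma_1}(w)}\,\chi_{\sigma_2}(w)\,{\det}(1 - w),
\]
and since $\det(1-w) = 0$ off $\Wp_\elli = W_\elli$, I may replace the sum over $\Wp$ by a sum over $W$, absorbing the index $[W:\Wp]$.

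Finally I would match the right-hand side with $2\,e_W(\sigma_1,\sigma_2)$. Here the key input is again Proposition~\ref{p:wedgepm}: the virtual character $\sum_i(-1)^i \chi_{\wedge^i V}$ equals $w \mapsto \det(1-w)$, so
\[
e_W(\sigma_1,\sigma_2)
= \sum_i(-1)^i\dim\Hom_W(\sigma_1 \otimes \wedge^i V,\sigma_2)
= \frac{1}{|W|}\sum_{w\in W}\ol{\chi_{\sigma_1}(w)}\,\chi_{\sigma_2}(w)\,{\det}(1-w),
\]
which is exactly half of the expression obtained above. This gives the asserted equality on representatives of $\ol{\caR}(W)$; it then descends to $\ol{\caR}(W)$ because $\sfi$ is well-defined there (as noted before the statement) and $e_W$ is by construction defined on $\ol{\caR}(W)$. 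I expect the only real subtlety to be the bookkeeping in the even case---keeping track of the distinction between $W$ and $\Wp = W_\ev$, between $\wti W$ and $\wti\Wp$, and the two-to-one nature of $p$---so that the numerical factors $2$, $[W:\Wp]$, and $|\wti\Wp| = 2|\Wp|$ conspire correctly; the conjugation/complex-conjugation symmetry of $\chi_{S^\pm}$ should be checked but is routine.
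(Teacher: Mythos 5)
Your argument is correct and follows essentially the same route as the paper: expand the pairing as a character sum, identify $(\chi_{S^+}-\chi_{S^-})^2$ via Propositions~\ref{p:s-squared} and~\ref{p:wedgepm} with $\tfrac{2}{[W:\Wp]}\det(1-w)$, and use that this function is supported on $\Wp_\elli = W_\elli$ to trade the sum over $\Wp$ for one over $W$ and thereby match $e_W$. The one point you flag as ``routine''---that $\chi_{S^+}-\chi_{S^-}$ is real-valued, so that $|\chi_{S^+}-\chi_{S^-}|^2$ really is the character of $(S^+-S^-)^{\otimes 2}$---is handled in the paper by invoking the self-duality of $S^\pm$; it also follows immediately from Propositions~\ref{p:s-squared} and~\ref{p:wedgepm} together with the observation that $\det(1-w)\ge 0$ for all $w\in\Wp$.
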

\begin{proof}
For irreducible representations $\sigma_1$ and $\sigma_2$ of $\Wp$, we compute
\begin{align*}
\left ( \sfi(\sigma_1), \sfi(\sigma_2) \right)_{\wti \Wp} &=
\left ( \sigma_1\otimes(S^+ - S^-), \sigma_2\otimes(S^+-S^-) \right)_{\wti \Wp} 
\\ &=
\left ( \sigma_1\otimes(S^+ - S^-)^{\otimes 2}, \sigma_2 \right)_{\wti \Wp} 
\\ &=
\frac{2}{[W:\Wp]}\left ( \sigma_1\otimes 
\wedge^\pm V, \sigma_2 \right)_{\wti \Wp}
\\ &=
\frac{2}{[W:\Wp]}\left ( \sigma_1\otimes 
\wedge^\pm V, \sigma_2 \right)_{\Wp}
\end{align*}
for the second equality, we have used that $S^\pm$ are self-dual; for
the third, we have used Proposition \ref{p:s-squared}; for the fourth,
we have used that the representations being paired all factor to $\Wp$.
On the other hand, since the character $\chi_{\wedge^\pm(V)}$
is supported on $W_\elli$ (by Proposition \ref{p:wedgepm})
and since $\Wp_\elli = W_\elli$,
\begin{align*}
\frac{2}{[W:\Wp]} 
\left ( \sigma_1\otimes 
\wedge^\pm V, \sigma_2 \right)_{\Wp}
&=
\frac{2}{[W:\Wp]} \frac{1}{|\Wp|}\sum_{x\in \Wp} 
\ol{\chi_{\sigma_1 \otimes \wedge^\pm(V)}(x)} \chi_{\sigma_2}(x)
\\ &=
\frac{2}{|W|}\sum_{x\in W} \ol{\chi_{\sigma_1 \otimes \wedge^\pm(V)}(x)} \chi_{\sigma_2}(x)
\\ &= 
2\sum_i (-1)^i 
\dim \Hom_W\left (\sigma_1\otimes \wedge^{i}V, \sigma_2 \right ) 
\\ &=
2e_W(\sigma_1,\sigma_2).
\end{align*}
This completes the proof.
\end{proof}

\subsection{Relation with the Euler-Poincar\'e pairing; proof of Theorem \ref{t:intro2}}
\label{ss:ss}
In the setting of Section \ref{ss:aha}, let $\CX$ and $\CY$ be two
irreducible $\caH$ modules.  Following \cite{SS} define
\[
\EP(\CX,\CY) = \sum_{i\geq 0} (-1)^i \dim \mathrm{Ext}^i_\caH(\CX,\CY).
\]

\begin{theorem}[Schneider-Stuhler, Opdam-Solleveld]
\label{t:ss}
Suppose $\CX$ is an irreducible discrete series $\CH$ module and $\CY$ is
an irreducible tempered $\CH$ module.
Then $\EP(\CX,\CY) = 0$
unless $\CX \simeq \CY$ in which case
\[
\EP(\CX,\CX) = 1.
\]
\end{theorem}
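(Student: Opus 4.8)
The plan is to deduce Theorem \ref{t:ss} from known results about the Euler--Poincar\'e pairing on affine Hecke algebras, specializing to the case of real central character. First I would recall the general fact, due to Opdam--Solleveld (building on Schneider--Stuhler), that $\EP$ is well-defined on the finite-length module category of $\caH$: the relevant $\mathrm{Ext}$-groups vanish in sufficiently high degree because $\caH$ has finite global dimension (it is a deformation of the group algebra of $W_\aff$, which acts properly on an affine space). So the alternating sum $\EP(\CX,\CY)$ makes sense. The next step is to invoke the key orthogonality property: $\EP$ descends to the elliptic quotient $\ol{\caR}(\caH)$ (the quotient of $\caR(\caH)$ by the span of modules parabolically induced from proper Levi subalgebras) and there it agrees, up to sign, with the elliptic pairing $e_W$ on $\ol{\caR}(W)$ under restriction; this is precisely one of the main theorems of Opdam--Solleveld \cite{OS}.

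Next I would bring in discrete series. For $\caH$ with real central character, an irreducible discrete series module $\CX$ is, by definition/construction (Casselman-type criterion, cf. Section \ref{ss:aha}), not a summand or virtual combination of properly induced modules: its $W$-character does not vanish on $W_\elli$, so its image in $\ol{\caR}(\caH)$ is nonzero. More is true: discrete series are ``rigid'' in the sense that $\mathrm{Ext}^i_\caH(\CX,\CY)$ for $\CY$ tempered is controlled by the tempered part of the category, and Opdam--Solleveld show tempered modules with a fixed central character form a nice block whose elliptic pairing is computed combinatorially. I would then argue: if $\CY$ is irreducible tempered and not isomorphic to $\CX$, then either $\CY$ is a virtual sum of properly induced modules (in which case $\EP(\CX,\CY) = 0$ because $\EP$ kills the parabolically-induced part against anything, by a Frobenius-type adjunction: $\mathrm{Ext}^\bullet_\caH(\CX, \Ind(\CZ)) \cong \mathrm{Ext}^\bullet_{\caH_L}(\mathrm{Res}(\CX),\CZ)$ together with the fact that the restriction of a discrete series to a proper Levi is not discrete, forcing the alternating sum to cancel), or $\CY$ descends to a nonzero class in $\ol{\caR}(\caH)$ orthogonal to that of $\CX$ under $e_W$. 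In the latter case the orthogonality of distinct discrete series classes under the elliptic pairing — which follows from Reeder's analysis \cite{R} of the elliptic pairing on the representation ring together with the fact that distinct discrete series have distinct central characters or are otherwise separated by $e_W$ — gives $\EP(\CX,\CY) = 0$. When $\CX \simeq \CY$, we get $\EP(\CX,\CX) = e_W(\CX,\CX)$, and the normalization $e_W(\CX,\CX) = 1$ for discrete series is again part of the Opdam--Solleveld/Reeder package (discrete series form an orthonormal set in the elliptic quotient).

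The main obstacle I anticipate is not any single computation but rather pinning down the precise form of the orthogonality that is actually needed here and citing it cleanly. The subtlety is that $\EP$ a priori sees all of $\mathrm{Ext}^\bullet$, whereas the statement only concerns the interaction of discrete series with tempered modules at real central character; one must be careful that the passage ``$\EP$ on $\caR(\caH)$ equals $e_W$ on $\ol{\caR}(W)$'' is valid without a temperedness hypothesis, or else restrict attention to the tempered block and verify that $\mathrm{Ext}^i_\caH(\CX,\CY)$ between tempered modules coincides with the Ext computed in that block. I would handle this by working entirely with the real-central-character category $\C R^0(\caH)$, where the equivalence of Section \ref{ss:relation} lets me transport everything to the graded Hecke algebra side, and then citing the explicit Opdam--Solleveld orthonormality of discrete series in the elliptic representation group. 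Once that citation is in place, the theorem is essentially immediate: it is the assertion that discrete series are orthonormal and orthogonal to all other tempered classes for $\EP$, which is exactly how $\EP$ is designed to behave.
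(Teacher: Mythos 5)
The paper's proof of this statement is a single line: ``This is \cite[Theorem 3.8]{OS}.'' You eventually arrive at essentially the same citation, but the route you take to get there has a logical problem worth flagging.

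Your argument tries to \emph{deduce} the $\EP$-orthonormality of discrete series from two ingredients: (i) the isometry identifying $\EP$ on $\ol\caR^0_\temp(\caH)$ with the elliptic pairing $e_W$ on $\ol\caR(W)$ (this is Theorem \ref{t:reeder} in the paper), and (ii) a purported ``$W$-side'' orthonormality of discrete series classes under $e_W$ drawn from Reeder and Opdam--Solleveld. The difficulty is that (ii) is not an independently available input: the orthonormality of discrete series in the elliptic representation ring of $W$ is obtained precisely by combining Theorem \ref{t:ss} with the isometry of Theorem \ref{t:reeder}, so invoking it to prove Theorem \ref{t:ss} is circular. In Opdam--Solleveld the statement you want is proved directly by analytic means (Plancherel decomposition, formal degrees), not by reduction to $e_W$ on $W$. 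Similarly, the step where you claim $\mathrm{Ext}^\bullet_\caH(\CX,\Ind(\CZ))$ has vanishing alternating sum because ``the restriction of a discrete series to a proper Levi is not discrete'' does not follow as stated --- non-discreteness of the restriction alone does not force the alternating sum of Ext-groups against $\CZ$ to cancel, and you would need a genuine argument there. Since in the end you simply cite the Opdam--Solleveld result, the conclusion is not in doubt, but the deduction you sketch in between should either be removed (and replaced by the direct citation to \cite[Theorem 3.8]{OS}, as the paper does) or reworked to avoid assuming the $W$-side orthonormality that is logically downstream of the statement being proved.
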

\begin{proof}
This is \cite[Theorem 3.8]{OS}.
\end{proof}

The connection with the previous section is as follows.
Let $\caR^0_\temp(\caH)$ denote the subspace of $\caR(\caH)$
generated by irreducible tempered modules with real central character.
Set
\begin{equation}
\label{e:RHbar}
\ol\caR^0_\temp(\caH) = \caR^0_\temp(\caH)/\ker(\EP)
\end{equation}
and continue to write $\EP$ for the induced nondegenerate form.  

\begin{theorem}[Reeder, Opdam-Solleveld]
\label{t:reeder} 
Recall the map $\res_W$ of \eqref{e:resaha} and the notation of
\eqref{e:ellisom} and \eqref{e:RHbar}.  Then $\res_W$ restricts to a
linear map
\[
\res_W\; : \; \ol \caR^0_\temp(\caH) \lra \ol\caR(W)
\]
which satisfies
\begin{equation}
\label{e:resisom}
{e}_W\left (\res_W(\CX),  \res_W(\CY) \right) = {EP}(\CX,\CY)
\end{equation}
for all $\CX, \CY \in \ol\caR(\caH)$.
\end{theorem}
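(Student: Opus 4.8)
### Proof proposal for Theorem \ref{t:reeder}

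The plan is to establish the two assertions separately: first that $\res_W$ is well-defined on the indicated quotient, and then that it intertwines the two pairings. The second assertion is in fact the engine that drives the first, so I would organize the argument to prove the identity \eqref{e:resisom} on the level of $\caR^0_\temp(\caH)$ and $\caR(W)$ (before passing to quotients), and then deduce well-definedness as a formal consequence. The key point is that $\ker(\EP)$ on $\caR^0_\temp(\caH)$ and $\ker(e_W)$ on $\caR(W)$ are exactly the radicals of the respective forms, so once the forms match under $\res_W$, the radical is carried into the radical automatically.

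The heart of the matter is therefore to prove, for irreducible tempered $\caH$-modules $\CX,\CY$ with real central character,
\begin{equation*}
e_W\bigl(\res_W(\CX),\res_W(\CY)\bigr) = \EP(\CX,\CY).
\end{equation*}
First I would reduce to the case where $\CX$ is discrete series. By the Langlands/Casselman classification, every irreducible tempered $\caH$-module with real central character is a summand of a module parabolically induced from a discrete series module on a Levi subalgebra; combined with the description of $\ker(e_W)$ as the span of representations induced from proper parabolics (the parenthetical remark after \eqref{e:ellisom}) and the analogous statement that $\ker(\EP)$ is spanned by properly-induced tempered modules (this is part of the Opdam--Solleveld package, \cite{OS}), both sides vanish unless both $\CX$ and $\CY$ are (genuinely) discrete series modules associated to the full root system. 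So it suffices to treat $\CX,\CY$ discrete series. In that case Theorem \ref{t:ss} gives $\EP(\CX,\CY) = \delta_{\CX,\CY}$, and I must show $e_W(\res_W(\CX),\res_W(\CY)) = \delta_{\CX,\CY}$ as well. This is where one invokes the structure of the elliptic space: the classes $\res_W(\CX)$ for $\CX$ discrete series are linearly independent in $\ol\caR(W)$ (since $\EP$ is nondegenerate on the discrete-series span and any linear relation would contradict that), and an Euler--Poincar\'e/orthogonality computation — comparing $e_W$ on $W$-characters with $\EP$ computed via a projective resolution of $\CX$ as an $\caH$-module — identifies the Gram matrices. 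Concretely, one uses that for $\caH$-modules the graded pieces of a resolution restrict to induced $W$-modules except in top/bottom degree, so the alternating sum of $\Hom_\caH$ spaces collapses, after applying $\res_W$, to the alternating sum $\sum_i(-1)^i\Hom_W(\res_W(\CX)\otimes\wedge^i V,\res_W(\CY))$ defining $e_W$.

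Granting the identity on $\caR^0_\temp(\caH)\times\caR(W)$, the rest is bookkeeping. If $\CX\in\ker(\EP)$, then $e_W(\res_W(\CX),\res_W(\CY))=\EP(\CX,\CY)=0$ for all tempered $\CY$; since the classes $\res_W(\CY)$ span the relevant elliptic space and $e_W$ is nondegenerate there, $\res_W(\CX)\in\ker(e_W)$. Hence $\res_W$ descends to $\ol\caR^0_\temp(\caH)\to\ol\caR(W)$, and the induced map satisfies \eqref{e:resisom} by construction. The main obstacle, I expect, is the reduction step: making precise the claim that $\ker(\EP)$ is exactly the span of properly-induced tempered modules and that $\res_W$ carries induction on the Hecke-algebra side to induction on the Weyl-group side (so that both kernels are "induced from proper parabolics" in a compatible way). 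This is essentially the content of \cite{R} together with \cite{OS}, so in the paper I would phrase it as citing those works; the remaining computation comparing Gram matrices via Theorem \ref{t:ss} is then short.
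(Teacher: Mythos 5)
The paper does not actually prove this theorem: it is a citation, to \cite[Theorem 5.10.1]{R} in the equal-parameter case and to \cite[Proposition 3.9(1), Theorem 3.2(c)]{OS} in general. So you are reconstructing a proof from scratch, which is fair to attempt, but your reduction step contains a genuine error. You argue that since every irreducible tempered module $\CX$ with real central character is a \emph{summand} of a module induced from a discrete series on a proper Levi, and since $\ker(e_W)$ and $\ker(\EP)$ are spanned by properly-induced modules, ``both sides vanish unless both $\CX$ and $\CY$ are discrete series.'' This does not follow: being a direct summand of an induced module is not the same as lying in the linear span of induced modules. If $\Ind(\delta)$ decomposes as $\CX_1\oplus\CX_2$, then $\CX_1+\CX_2$ is in the induced span but $\CX_1$ alone need not be, and indeed $e_W(\res_W(\CX_1),\res_W(\CX_1))$ can be nonzero. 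This is precisely the phenomenon driving the paper's own Theorem \ref{t:diffeq}: tempered modules $X(e,\phi)$ with $e$ quasidistinguished but \emph{not} distinguished are not discrete series, yet $\EP(\CX,\CX)=1$ and $e_W(\res_W\CX,\res_W\CX)\neq 0$. Your reduction would force both to be zero in these cases. (The parenthetical justification of linear independence of the $\res_W(\CX)$ ``since $\EP$ is nondegenerate'' is also circular: it assumes the identity $e_W\circ\res_W=\EP$ you are trying to prove.)

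The good news is that the Koszul-resolution argument you gesture at in the final sentences is the actual proof, and it makes the reduction step both wrong \emph{and} unnecessary. The resolution $\bH\otimes_{\bC[W]}(\wedge^\bullet V\otimes X)\to X$ has terms that are projective (induced from $\bC[W]$), so $\Hom_\bH$ of each term into $Y$ is $\Hom_W(\wedge^i V\otimes X,Y)$, and the Euler characteristic of the complex equals the Euler characteristic of its cohomology. That identity $\sum_i(-1)^i\dim\Ext^i_\bH(X,Y)=\sum_i(-1)^i\dim\Hom_W(X\otimes\wedge^i V,Y)$ holds for \emph{all} finite-dimensional modules, not just discrete series, and is exactly \eqref{e:resisom} on the level of $\caR^0_\temp$; the descent to the quotients $\ol\caR^0_\temp(\caH)\to\ol\caR(W)$ then follows formally as in your last paragraph. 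If you want to reconstruct a proof rather than cite \cite{R,OS}, lead with the resolution and drop the discrete-series reduction entirely.
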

\begin{proof}
For equal parameters, this is \cite[Theorem 5.10.1]{R}.   The general case follows from
Proposition 3.9(1) and Theorem 3.2(c) in \cite{OS}.
\end{proof}

\begin{lemma}
\label{l:relation}
In the setting of Section \ref{ss:daha}, let $X$ and $Y$ be two
irreducible tempered $\bH$ modules with real central character,
and write $\CX$ and $\CY$ for the corresponding tempered
$\CH$ modules (Section \ref{ss:relation}).
Recall the Dirac index $I(X) = X\otimes (S^+ - S^-)$ of \eqref{e:index}.
Then
\[
\left (I(X),I(Y)\right)_{\wti W}=2 EP(\CX,\CY).
\]
If we further assume that $X$ is a discrete series module, then
\begin{equation}
\label{e:index2}
\left (I(X),I(X)\right)_{\wti W}=2.
\end{equation}
\end{lemma}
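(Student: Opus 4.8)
The plan is to reduce the statement about $I(X)$ to the Euler--Poincar\'e pairing computed in Theorem~\ref{t:reeder}, using the isometry property of the map $\sfi$ from Proposition~\ref{p:isometry}. First I would observe that $I(X) = \sfi(\res_W(X))$ with notation as in \eqref{e:tau}--\eqref{e:tau2}, since the definition \eqref{e:index} of $I(X)$ is literally \eqref{e:tau2} applied to the $W$-representation $\res_W(X)$ (equivalently $\res_W(\CX)$). Thus, passing to the quotient $\ol{\caR}(W)$, we may regard $\res_W(X)$ and $\res_W(Y)$ as elements on which $\sfi$ is defined via \eqref{e:taubar}. Applying Proposition~\ref{p:isometry} gives
\[
\left(I(X), I(Y)\right)_{\wti W} = \left(\sfi(\res_W(X)), \sfi(\res_W(Y))\right)_{\wti W} = 2\,e_W\!\left(\res_W(X), \res_W(Y)\right).
\]
Here one should note that $\wti W$ and $\wti\Wp$ may differ (in the even case), but the inner product $(\,\cdot\,,\,\cdot\,)_{\wti W}$ of two genuine classes supported on the elliptic set agrees up to the index factor with $(\,\cdot\,,\,\cdot\,)_{\wti \Wp}$, since $\Wp_\elli = W_\elli$ and $I(X)$ vanishes off the elliptic set by Remark~\ref{r:indexvanishing}; this is the one bookkeeping point that needs care, and I would either absorb it into a short parenthetical or arrange the normalizations so it cancels.

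Next I would invoke Theorem~\ref{t:reeder}: under the correspondence of Section~\ref{ss:relation}, the tempered $\bH$-modules $X, Y$ with real central character correspond to tempered $\CH$-modules $\CX, \CY$, and \eqref{e:wcorr} ensures $\res_W(X) = \res_W(\CX)$ as elements of $\caR(W)$. Theorem~\ref{t:reeder} then gives $e_W(\res_W(\CX), \res_W(\CY)) = \EP(\CX, \CY)$. Combining this with the display above yields the first assertion,
\[
\left(I(X), I(Y)\right)_{\wti W} = 2\,\EP(\CX, \CY).
\]

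For the second assertion, I would specialize $Y = X$ with $X$ a discrete series module. Then $\CX$ is an irreducible discrete series $\CH$-module (by property (a) of the equivalence in Section~\ref{ss:relation}, which matches tempered modules and, in particular, sends discrete series to discrete series, or else directly by the weight-criterion characterization). Applying Theorem~\ref{t:ss} (Schneider--Stuhler, Opdam--Solleveld) with $\CX = \CY$ both equal to this discrete series module gives $\EP(\CX, \CX) = 1$, whence $\left(I(X), I(X)\right)_{\wti W} = 2$, which is \eqref{e:index2}.

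I do not expect a serious obstacle here: the proof is a two-step chain of citations glued by the identification $I = \sfi \circ \res_W$. The one place demanding attention is the passage between $\wti W$ and $\wti \Wp$ and the compatibility of the two bilinear pairings $(\,\cdot\,,\,\cdot\,)_{\wti W}$ and $(\,\cdot\,,\,\cdot\,)_{\wti\Wp}$ on elliptically-supported classes --- essentially checking that the index $[W:\Wp]$ enters the same way in Proposition~\ref{p:isometry} and in the definition of $I(X)$ so that the final constant is exactly $2$. A secondary point, needed only for \eqref{e:index2}, is confirming that the Lusztig equivalence of Section~\ref{ss:relation} carries discrete series to discrete series; this follows from property (a) together with the fact that discrete series are exactly the tempered modules with no nonzero parabolically-induced quotient, a property preserved under the equivalence.
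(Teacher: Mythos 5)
Your proof is correct and follows essentially the same chain as the paper: identify $I(X)=\sfi(\res_W(X))$, apply Proposition~\ref{p:isometry}, then Theorem~\ref{t:reeder}, and finally Theorem~\ref{t:ss} for the discrete series case. Your parenthetical caution about $\wti W$ versus $\wti\Wp$ is well placed --- the paper's own proof in fact writes the pairing over $\wti\Wp$ even though the lemma statement says $\wti W$, a looseness you rightly noticed needs to be reconciled (the pairing naturally lives on $\caR(\wti\Wp)$, where $I(X)$ is defined).
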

\begin{proof}
From \eqref{e:index} and \eqref{e:tau},
it follows that $I(X) = \sfi\left (\res_W(X)\right )$.
Proposition \ref{p:isometry} and Theorem \ref{t:reeder}
imply that
\[
\left( I(X), I(Y)\right )_{\wti W'} = 2\EP(\CX,\CY).
\]
This is the first assertion of the lemma, and \eqref{e:index2} then follows from Theorem \ref{t:ss}.
\end{proof}

\begin{theorem}\label{t:diff}
In the setting of Section \ref{ss:daha}, suppose $X$ is an irreducible discrete series module for $\bH$.
Then there exist inequivalent genuine irreducible $\wti \Wp$
representations $\tilde \sigma^+$ and $\tilde \sigma^-$ such that
\[
I(X) = \tilde \sigma^+ - \tilde \sigma^-.
\]
If $\dim(V_0)$ is odd (so that $\wti \Wp=\wti W$ by definition), then
$\tilde\sigma^- := \tilde \sigma^+ \otimes \sgn$.
If $\dim(V_0)$ is even (so that $\wti \Wp$ is an index two normal subgroup of $\wti W$),
the action of
$H = \wti W\bigr /\wti \Wp$ interchanges $\tilde \sigma^+$ and $\tilde \sigma^-$; equivalently,
there exists an irreducible representation $\tilde \sigma$ of $\wti W$ which splits as 
$\tilde \sigma^+ \oplus \tilde \sigma^-$ when restricted to $\wti \Wp$.
\end{theorem}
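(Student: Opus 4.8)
The plan is to extract all the information from the single numerical identity \eqref{e:index2}, namely $\left(I(X),I(X)\right)_{\wti W}=2$, combined with the fact that $I(X)\in\caR_g(\wti\Wp)$ is a genuine virtual representation and with the explicit sign/index symmetries of the construction of $S^\pm$ recorded in Section \ref{ss:spinor}. First I would observe that, since $I(X)=\res_{\Wp}(X)\otimes(S^+-S^-)$ and $X$ is an honest module, $I(X)$ is a $\bZ$-linear combination of genuine irreducibles of $\wti\Wp$; write $I(X)=\sum_j m_j\tilde\tau_j$ with $m_j\in\bZ$ and the $\tilde\tau_j$ distinct genuine irreducibles. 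Then $\left(I(X),I(X)\right)_{\wti\Wp}=\sum_j m_j^2$, and the hypothesis forces $\sum_j m_j^2=2$ (using $\wti\Wp=\wti W$ or $\wti W'$ as appropriate; in the even case one must first pass from $\left(I(X),I(X)\right)_{\wti W}=2$ to the corresponding statement for $\wti\Wp$, which follows because $I(X)$ already lies in $\caR(\wti\Wp)$ and Frobenius reciprocity relates the two pairings). The only way to write $2$ as a sum of squares of integers is $1+1$, so $I(X)=\pm(\tilde\sigma^+-\tilde\sigma^-)$ for two distinct genuine irreducibles; absorbing the sign into the labelling gives $I(X)=\tilde\sigma^+-\tilde\sigma^-$.

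Next I would pin down the relationship between $\tilde\sigma^+$ and $\tilde\sigma^-$ dictated by the automorphisms in the two parity cases. In the odd case, $\wti\Wp=\wti W$ and \eqref{e:signtwist} says that tensoring with $\sgn\circ p$ interchanges $S^+$ and $S^-$; hence it sends $I(X)=\res_W(X)\otimes(S^+-S^-)$ to its negative, so it interchanges $\tilde\sigma^+$ and $\tilde\sigma^-$, i.e. $\tilde\sigma^-=\tilde\sigma^+\otimes\sgn$. In the even case, $\wti\Wp\trianglelefteq\wti W$ has index two, and by the last sentence of the first paragraph of Section \ref{ss:spinor} the nontrivial coset of $\wti W/\wti\Wp$ interchanges $S^+$ and $S^-$; the same computation then shows the $H=\wti W/\wti\Wp$-action carries $I(X)$ to $-I(X)$, hence swaps $\tilde\sigma^+$ and $\tilde\sigma^-$. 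Since these two $\wti\Wp$-irreducibles are distinct and conjugate under $H$, Clifford theory for the index-two subgroup $\wti\Wp\subset\wti W$ produces an irreducible $\wti W$-representation $\tilde\sigma$ whose restriction to $\wti\Wp$ is exactly $\tilde\sigma^+\oplus\tilde\sigma^-$ (an $H$-unstable irreducible of the subgroup induces irreducibly, and the induced module restricts back to the sum of the two conjugates).

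One genuineness point deserves care: I must check $\tilde\sigma^\pm$ are genuine, not just that their difference is. This is automatic because $I(X)\in\caR_g(\wti\Wp)$ by construction (both $S^\pm$ are genuine and $\res_{\Wp}(X)$ factors through $\Wp$), so every irreducible constituent of $I(X)$ with nonzero coefficient is genuine. The step I expect to be the only real friction is the even-case bookkeeping — matching the pairing on $\wti W$ with the one on $\wti\Wp$ and invoking Clifford theory cleanly — but this is entirely standard finite group theory once the symmetry statement from Section \ref{ss:spinor} is in hand; the substantive input, the identity $\left(I(X),I(X)\right)=2$, has already been supplied by Lemma \ref{l:relation}.
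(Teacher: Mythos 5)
Your proposal is correct and follows essentially the same route as the paper: first, Lemma~\ref{l:relation} forces $I(X)$ to be a $\pm1$-combination of two distinct genuine irreducibles of $\wti\Wp$, and second, the involution (tensoring with $\sgn$ in the odd case, the $H=\wti W/\wti\Wp$-action in the even case) sends $I(X)$ to $-I(X)$, hence swaps the two constituents and fixes the opposite signs. Two small remarks: in your first paragraph you assert $I(X)=\pm(\tilde\sigma^+-\tilde\sigma^-)$ directly from $\sum m_j^2=2$, but that identity only gives $m_1,m_2\in\{\pm1\}$; the opposite signs are supplied only by the involution argument in your second paragraph (or, alternatively, by the observation that $\dim I(X)=0$), so the conclusion is reached a sentence early. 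And the Frobenius-reciprocity step you flag for passing from a $\wti W$-pairing to a $\wti\Wp$-pairing is a red herring: the pairing in Lemma~\ref{l:relation} is computed via Proposition~\ref{p:isometry} and is already the $\wti\Wp$-pairing (the subscript $\wti W$ in the lemma statement is a slip for $\wti\Wp$, as the lemma's own proof makes clear), so no transfer is needed.
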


\begin{proof}
By \eqref{e:index2}, there exist
inequivalent irreducible $\wti \Wp$ representations $\tilde \sigma_1$
and $\tilde \sigma_2$ and coefficients $a_i \in \{\pm 1\}$ such that
\[
I(X) = a_1 \sigma_1 + a_2 \sigma_2.
\]
If $\dim(V_0)$ is odd, then $S^\pm \otimes \sgn = S^\mp$, and hence
$I(X) \otimes \sgn = -I(X)$.  This implies $\sigma_2 = \sigma_1 \otimes \sgn$
and $a_1 = -a_2$, as claimed.  On the other hand, if $\dim(V_0)$ is even, then the action of
$H = \wti W\bigr /\wti \Wp$ interchanges $S^+$ and $S^-$.  Hence $H$ acts
by $-1$ on $I(X)$, and so $H$ interchanges $\sigma_1$ and $\sigma_2$
and $a_1 = -a_2$.  
\end{proof}

\begin{proof}[{Proof of Theorem \ref{t:intro2}}]
By definition, $I(X) = \res_{\Wp}(X) \otimes (S^+ - S^-)$.  So applying Theorem
\ref{t:diff}, taking characters, and dividing by $\chi_{S^+} -
\chi_{S^-}$ gives the conclusion of Theorem \ref{t:intro2}.
\end{proof}

\subsection{The case of equal parameters and the proof of Theorem \ref{t:intro}}
\label{ss:equal}
Let $\frg$ denote the complex semisimple Lie algebra constructed from the root system
$\Phi$.  In particular this construction fixes a Cartan subalgebra $\frh$ of $\frg$ isomorphic to $V$,
and hence $\frh$ is equipped with the bilinear form $\langle~,~\rangle$.
Let $e$ be a nilpotent element
of $\frg$, and consider an $\mathfrak{s}\mathfrak{l}_2$-triple $\{e,f,s\}$ with $s\in \frh$.  Then
\begin{equation}
\label{e:he}
h(e) := \langle s,s \rangle
\end{equation}
is well-defined independent of the choice of triple.
Let $\Ahat_0(e)$ denote the irreducible representations of the component group
of the centralizer of $e$ in $\mathrm{Ad}(\frg)$ which appear in the Springer correspondence.

According to the classification of \cite{KL} and \cite{L}, the pair
$(e,\phi)$ parametrizes an irreducible tempered $\bH$ module
$X(e,\phi)$ with (real) central character represented by $\mathsf r c s/2$,
where $s$ is the semisimple element mentioned above, such that the
character of $\res_W(X(e,\phi))$ is $\chi_{e,\phi}$. 

\begin{theorem}
\label{t:diffeq}
Suppose that the parameter function $c$ in the definition of $\bH$ is
constant.
Let $X$ be an irreducible tempered $\bH$ module with real central character
such that $I(X)$ is nonzero (cf.~Remark \ref{r:indexvanishing}).  Then 
\begin{equation}
\label{e:diffeq}
\left (I(X),I(X)\right)_{\wti W}=2,
\end{equation}
and the conclusion of Theorem \ref{t:diff} holds.
\end{theorem}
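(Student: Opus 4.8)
The plan is to reduce the theorem to the discrete-series case already handled in Lemma \ref{l:relation} and Theorem \ref{t:diff}. The statement of Theorem \ref{t:diffeq} asserts two things for a tempered $X$ with $I(X) \neq 0$ in the equal-parameter setting: first the norm computation \eqref{e:diffeq}, and second the structural conclusion of Theorem \ref{t:diff} (that $I(X)$ has the form $\tilde\sigma^+ - \tilde\sigma^-$). The key geometric input, due to Kazhdan--Lusztig and Lusztig, is that in the equal-parameter case every tempered $X$ with real central character is of the form $X(e,\phi)$ with $\res_W(X(e,\phi))$ having character $\chi_{e,\phi}$. Combined with Reeder's theorem that $\chi_{e,\phi}$ vanishes on $W_\elli$ exactly when $e$ is not quasidistinguished, the hypothesis $I(X) \neq 0$ (equivalently, by Remark \ref{r:indexvanishing}, $\chi_{e,\phi}$ does not vanish on the elliptic set) singles out the quasidistinguished $e$.

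First I would record that $I(X) = \sfi(\res_W(X))$ by \eqref{e:index}, \eqref{e:tau}, and that by Proposition \ref{p:isometry} combined with Theorem \ref{t:reeder} we have $\left(I(X),I(X)\right)_{\wti W} = 2\,\EP(\CX,\CX)$, where $\CX$ is the $\CH$-module corresponding to $X$ under the equivalence of Section \ref{ss:relation}. So \eqref{e:diffeq} is equivalent to the assertion $\EP(\CX,\CX) = 1$, i.e.\ that $\res_W(X)$ is a primitive vector (of self-pairing $1$) in $\ol\caR(W)$ under $e_W$. Next I would invoke the elliptic representation theory: in the equal-parameter setting, work of Reeder and Opdam--Solleveld identifies an orthonormal basis of $\ol\caR^0_\temp(\CH)$ under $\EP$ consisting precisely of the images of the (appropriately parametrized) tempered modules whose $W$-characters survive on the elliptic set --- equivalently, the Dirac-index-nonzero ones. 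Concretely, one uses that the elliptic pairing is positive definite (Reeder \cite{R}) and that Schneider--Stuhler's/Opdam--Solleveld's orthogonality of discrete series (Theorem \ref{t:ss}) propagates to tempered modules via parabolic induction and the fact that induced pieces lie in $\ker(\EP) = \ker(e_W \circ \res_W)$. This forces $\EP(\CX,\CX) = 1$ for the relevant $X$, giving \eqref{e:diffeq}.

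Once \eqref{e:diffeq} is in hand, the proof of Theorem \ref{t:diff} goes through verbatim: writing $I(X) = a_1\tilde\sigma_1 + a_2\tilde\sigma_2$ with $a_i \in \{\pm 1\}$ and $\tilde\sigma_i$ inequivalent genuine irreducibles of $\wti\Wp$ (the norm being $2$ forces exactly two constituents, each with coefficient $\pm 1$), and then using in the odd case that $S^\pm \otimes \sgn = S^\mp$ so $I(X)\otimes\sgn = -I(X)$, and in the even case that $H = \wti W/\wti\Wp$ swaps $S^+$ and $S^-$ so acts by $-1$ on $I(X)$. In both cases one concludes $a_1 = -a_2$ and $\tilde\sigma_2 = \tilde\sigma_1 \otimes \sgn$ (odd) or $\tilde\sigma_2 = H\cdot\tilde\sigma_1$ (even), which is exactly the conclusion of Theorem \ref{t:diff}. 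So I would simply state that the argument of Theorem \ref{t:diff} applies without change.

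The main obstacle is the first step --- establishing $\EP(\CX,\CX) = 1$ for a general tempered $X$ in the equal-parameter case, rather than only for discrete series where Theorem \ref{t:ss} applies directly. The honest route is to appeal to the structure of $\ol\caR^0_\temp(\CH)$ as worked out by Opdam--Solleveld: the tempered modules with real central character that are not in the span of properly-parabolically-induced ones form (after passing to the elliptic quotient) an orthonormal basis for the positive-definite form $\EP$. This is where one genuinely uses \cite{OS} (Theorem 3.2(c) and Proposition 3.9) together with Reeder's positivity, as already cited in Theorem \ref{t:reeder}; the hypothesis $I(X) \neq 0$ guarantees $\res_W(X)$ is a nonzero element of $\ol\caR(W)$, hence corresponds to such a basis vector, forcing its self-pairing to be $1$. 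I would present this as: ``By \cite[Theorem 3.2(c), Proposition 3.9]{OS} (cf.\ \cite{R} for equal parameters), the nonvanishing of $I(X)$ implies $\EP(\CX,\CX) = 1$,'' and then deduce \eqref{e:diffeq} from Proposition \ref{p:isometry} and Theorem \ref{t:reeder} and close with the reference to Theorem \ref{t:diff}'s argument.
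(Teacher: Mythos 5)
Your reduction of the theorem to the assertion $\EP(\CX,\CX)=1$ is correct and matches the paper: $I(X)=\sfi(\res_W(X))$, and Proposition \ref{p:isometry} together with Theorem \ref{t:reeder} gives $\bigl(I(X),I(X)\bigr)_{\wti W}=2\,\EP(\CX,\CX)$. You also correctly observe that the structural conclusion then follows by rerunning the argument of Theorem \ref{t:diff} verbatim, and that the discrete-series case is already covered by Theorem \ref{t:ss}. Up to this point you are on the paper's track.

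The gap is in how you dispose of the tempered non--discrete-series case. You assert that the tempered modules with nonzero Dirac index furnish an \emph{orthonormal} basis of $\ol\caR^0_\temp(\CH)$, citing positive-definiteness of the elliptic pairing (Reeder) plus the general structure results of Opdam--Solleveld, and then conclude $\EP(\CX,\CX)=1$ because $\res_W(X)$ is nonzero. This does not follow: positive-definiteness only says each self-pairing is positive, not that it equals $1$, and Theorem \ref{t:ss} gives orthonormality only for discrete series. A nonzero tempered class could a priori have self-pairing $\ge 2$, and nothing in the cited results rules this out in general. The paper handles this by using the Kazhdan--Lusztig parametrization to write $X=X(e,\phi)$ with $e$ quasidistinguished but not distinguished, invoking Reeder's isometry $\ol\caR_0(A(e))\to\ol\caR_e(W)$ from \cite[(3.4.1),(3.4.3)]{R} to get the conclusion for $\phi=\triv$ (since the trivial character of $A(e)$ has elliptic self-pairing $1$), and then --- crucially --- appealing to a case-by-case ``empirical observation'' that $\ol\caR_0(A(e))$ is one-dimensional in this range (equivalently, that $\res_W(X(e,\phi))$ and $\res_W(X(e,\triv))$ agree up to sign in $\ol\caR_e(W)$). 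That case-by-case check is the real content here, and your proposal replaces it with an orthonormality claim that is not available off the shelf. To repair the argument you would need either to supply that verification or to locate a genuine reference proving the orthonormality you invoked, which the paper itself evidently could not do.
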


\begin{proof} 
Let $\C X$ be the irreducible tempered module for $\C H$ with real central character
corresponding to $X$. 
The proof of Theorem \ref{t:diff} shows that the theorem amounts to establishing that
$\EP(\C X,\C X) =1$. We have already seen that this is the case when
$\C X$ is a discrete series, so we assume that $\C X$ is not a
discrete series. 
The assumption that $I(X) \neq 0$
implies that $\C X$ has nonzero image in $\ol \caR^0_\temp(\bH)$,
and hence the image of $\res_W(X)$ is nonzero.
By the result of Reeder mentioned in the introduction, this implies
that $X=X(e,\phi)$, for some quasidistinguished, not distinguished
nilpotent element $e$, and $\phi\in \Ahat_0(e).$ Fix such an element
$e$.  Let $\C R_e(W)$
denote the subspace of $\C R(W)$ spanned by
$\{\res_W(X(e,\phi)):\phi\in\Ahat_0(e)\}$, and let $\C R_0(A(e))$
denote the subspace of $\C R(A(e))$ spanned by $\Ahat_0(e)$. By
\cite[(3.4.1),(3.4.3)]{R}, the map
$\C R_0(A(e))\to \C R_e(W)$, $\phi\mapsto
\res_W(X(e,\phi))$ induces a linear isomorphism $\ol \caR_0(A(e))\to
\ol\caR_e(W)$ which is an isometry with respect to the elliptic
pairings on the two spaces. The definition of $\ol\caR_0(A(e))$ is as in
\cite[Section 3.2]{R}. Since the elliptic pairing of the trivial
representation with itself is one, notice that this implies the
conclusion for $X=X(e,\triv).$

To finish, we need an empirical observation, namely that if $e$ is quasidistinguished,
but not distinguished,
the space $\ol\caR_0(A(e))$ is one dimensional. This amounts to a
case-by-case verification that $X(e,\phi)\pm X(e,\triv)$ is a
combination of induced $\bH$-modules, for every $\phi\in \Ahat_0(e)$. In particular,
$\res_W(X(e,\phi))$ and $\res_W(X(e,\triv))$ have the same image, up to a
sign, in $\ol\caR_e(W)$. The claim follows.
\end{proof}

Arguing as in the proof of Theorem \ref{t:intro2} at the end of the previous section, we obtain the following.
\begin{corollary}
\label{c:diffeq}
Suppose that the parameter function $c$ in the definition of $\bH$ is constant.
Then
Theorem \ref{t:intro2} holds for all irreducible
tempered $\bH$ modules $X$ with real central character 
such that the character of $\res_W(X)$ does not identically vanish on the set of elliptic elements 
of $W$.
\end{corollary}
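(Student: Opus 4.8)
The plan is to deduce Corollary \ref{c:diffeq} by combining Theorem \ref{t:diffeq} with the factorization argument already used for Theorem \ref{t:intro2}. First I would observe that the hypothesis ``the character of $\res_W(X)$ does not identically vanish on $W_\elli$'' is exactly the hypothesis ``$I(X)\neq 0$'' appearing in Theorem \ref{t:diffeq}: by taking characters of \eqref{e:index} and invoking \eqref{e:spm-intro} (equivalently Remark \ref{r:indexvanishing}), the element $I(X)=\res_{\Wp}(X)\otimes(S^+-S^-)$ vanishes in $\caR(\wti\Wp)$ precisely when $\chi_{\res_\Wp(X)}$ vanishes on $\Wp_\elli = W_\elli$. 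So the two formulations of the nonvanishing condition agree.

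Next, given such an $X$, Theorem \ref{t:diffeq} furnishes $\left(I(X),I(X)\right)_{\wti W} = 2$ and the conclusion of Theorem \ref{t:diff}: there are inequivalent genuine irreducible $\wti\Wp$-representations $\tilde\sigma^+$ and $\tilde\sigma^-$, with $\tilde\sigma^-=\tilde\sigma^+\otimes\sgn$ when $\dim V_0$ is odd (and related by the $H=\wti W/\wti\Wp$ action when $\dim V_0$ is even), such that $I(X)=\tilde\sigma^+-\tilde\sigma^-$. Then, exactly as in the proof of Theorem \ref{t:intro2}, I would unwind the definition $I(X)=\res_{\Wp}(X)\otimes(S^+-S^-)$, take characters of the identity $\res_{\Wp}(X)\otimes(S^+-S^-)=\tilde\sigma^+-\tilde\sigma^-$, evaluate at any $\tilde w$ lying over a fixed elliptic $w\in W$, and divide through by $\chi_{S^+}(\tilde w)-\chi_{S^-}(\tilde w)$, which is nonzero by \eqref{e:spm-intro} since $p(\tilde w)=w$ is elliptic. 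This yields precisely the formula \eqref{e:intro2} of Theorem \ref{t:intro2}, and since the quotient is independent of the choice of $\tilde w$ (both $S^\pm$ and $\tilde\sigma^\pm$ being genuine), the statement of Theorem \ref{t:intro2} holds for this $X$. As $X$ was an arbitrary irreducible tempered module with real central character and nonvanishing elliptic $W$-character, this is exactly the assertion of Corollary \ref{c:diffeq}.

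There is essentially no obstacle here: the corollary is a formal consequence, and the only thing to be careful about is matching up the hypotheses of the corollary (phrased in terms of nonvanishing of $\chi_{\res_W(X)}$ on the elliptic set) with those of Theorem \ref{t:diffeq} (phrased as $I(X)\neq 0$), which is the content of the first paragraph above. All the substantive work — the isometry of Proposition \ref{p:isometry}, the Euler--Poincar\'e computation of Theorem \ref{t:reeder}, and the case-by-case verification in Theorem \ref{t:diffeq} that $\ol\caR_0(A(e))$ is one-dimensional for quasidistinguished non-distinguished $e$ — has already been carried out, so the proof of the corollary is just the bookkeeping of assembling these pieces and dividing by $\chi_{S^+}-\chi_{S^-}$.
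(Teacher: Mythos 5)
Your proposal is correct and follows exactly the paper's route: the paper's proof is the one-line remark ``Arguing as in the proof of Theorem~\ref{t:intro2}\dots,'' which unwinds to precisely what you wrote --- match the nonvanishing hypothesis with $I(X)\neq 0$ via Remark~\ref{r:indexvanishing}, invoke Theorem~\ref{t:diffeq} to get $I(X)=\tilde\sigma^+-\tilde\sigma^-$, and then take characters and divide by $\chi_{S^+}-\chi_{S^-}$ as in the proof of Theorem~\ref{t:intro2}. You have filled in the bookkeeping the paper leaves implicit, but there is no substantive difference.
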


Since
the character of $\res_W(X(e,\phi))$ is $\chi_{e,\phi}$,
 \eqref{e:intro} now follows from Corollary \ref{c:diffeq}. 
To complete the proof of Theorem \ref{t:intro}, 
it remains to identify the numerator in \eqref{e:intro} explicitly.

Recall from Section \ref{s:intro} the notation $\sigma(e,\phi)$ for the irreducible Springer
representation in the top degree homology.
\begin{lemma}
\label{l:diraclwt} Suppose
$\tilde \sigma$ is an irreducible representation of $\wti \Wp$ such that
\[
\left (\tilde \sigma, X(e,\phi) \otimes (S^+ - S^-) \right )_{\wti \Wp} \neq 0.
\]
Then
\[
\left (\tilde \sigma, \sigma(e,\phi) \otimes (S^+ - S^-) \right )_{\wti \Wp} \neq 0.
\]
\end{lemma}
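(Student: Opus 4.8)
The plan is to work in the elliptic Grothendieck group $\ol{\caR}(W)$, where $\sfi$ is injective and, by Proposition \ref{p:isometry}, an isometry up to the factor $2$, and where $I(X)=\sfi(\res_W X)$ by \eqref{e:index} and \eqref{e:tau} (so that $X(e,\phi)\otimes(S^+-S^-)$ in the lemma means $I(X(e,\phi))$). The first ingredient I would record is the standard triangularity of Springer homology: the classes $\sigma(e',\phi')$, with $\CO'$ ranging over nilpotent orbits and $\phi'$ over $\Ahat_0(e')$, form a $\bZ$-basis of $\caR(W)$, and the decomposition theorem applied to the Springer resolution gives
\[
\res_W X(e,\phi)=\sigma(e,\phi)+\sum_{\CO'\supsetneq\CO}m_{e',\phi'}\,\sigma(e',\phi'),\qquad m_{e',\phi'}\in\bZ_{\ge 0},
\]
the sum being over orbits \emph{strictly} larger than $\CO$ in the closure order, since the stratum $\CO'=\CO$ contributes only $\sigma(e,\phi)$ (the open stratum of $\overline{\CO}$, in top homological degree). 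Hence the transition matrix between $\{\sigma(e',\phi')\}$ and $\{\res_W X(e',\phi')\}$ is triangular with $1$'s on the diagonal for any total order refining the closure order, and inverting it yields, in $\caR(W)$,
\[
\sigma(e,\phi)=\res_W X(e,\phi)+\sum_{\CO''\supsetneq\CO}n_{e'',\phi''}\,\res_W X(e'',\phi''),\qquad n_{e'',\phi''}\in\bZ.
\]

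The second ingredient is orthogonality: in $\ol{\caR}(W)$ each $\res_W X(e'',\phi'')$ with $\CO''\supsetneq\CO$ above is $e_W$-orthogonal to $\res_W X(e,\phi)$. By Theorem \ref{t:reeder},
\[
e_W\bigl(\res_W X(e,\phi),\,\res_W X(e'',\phi'')\bigr)=\EP\bigl(\CX(e,\phi),\,\CX(e'',\phi'')\bigr),
\]
and the right-hand side vanishes because $X(e,\phi)$ and $X(e'',\phi'')$ have central characters $\mathsf r c s/2$ and $\mathsf r c s''/2$ with $s\notin W\cdot s''$ (distinct nilpotent orbits have distinct weighted Dynkin diagrams, Dynkin--Kostant), so the corresponding $\CH$-modules have distinct central characters and all Ext groups between them vanish. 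Substituting the second display into the first slot of $e_W$ and using $\EP(\CX(e,\phi),\CX(e,\phi))=1$ (Theorem \ref{t:diffeq}; the hypothesis of the lemma forces $I(X(e,\phi))\neq 0$, so $X(e,\phi)$ is one of the modules treated there) gives
\[
e_W\bigl(\res_W X(e,\phi),\,\sigma(e,\phi)\bigr)=e_W\bigl(\res_W X(e,\phi),\,\res_W X(e,\phi)\bigr)=1.
\]

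It remains to transport this to $\wti\Wp$. By Theorem \ref{t:diffeq}, $I(X(e,\phi))=\tilde\sigma^+-\tilde\sigma^-$ with $\tilde\sigma^+$, $\tilde\sigma^-$ inequivalent genuine irreducibles, so the hypothesis $(\tilde\sigma,I(X(e,\phi)))_{\wti\Wp}\neq 0$ forces $\tilde\sigma\in\{\tilde\sigma^+,\tilde\sigma^-\}$; it suffices to treat $\tilde\sigma=\tilde\sigma^+$. Since $(S^+-S^-)\otimes\sgn=-(S^+-S^-)$ when $\dim V_0$ is odd, and $S^+$ and $S^-$ are interchanged by $\wti W/\wti\Wp$ when $\dim V_0$ is even, in either case the relevant operation carries $\sfi(\sigma')$ to $-\sfi(\sigma')$ for every $W$-module $\sigma'$, whence $(\tilde\sigma^-,\sfi(\sigma'))_{\wti\Wp}=-(\tilde\sigma^+,\sfi(\sigma'))_{\wti\Wp}$. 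Applying this with $\sigma'=\sigma(e,\phi)$, together with $I(X(e,\phi))=\sfi(\res_W X(e,\phi))$ and Proposition \ref{p:isometry},
\[
2\bigl(\tilde\sigma^+,\sigma(e,\phi)\otimes(S^+-S^-)\bigr)_{\wti\Wp}=\bigl(I(X(e,\phi)),\,\sfi(\sigma(e,\phi))\bigr)_{\wti\Wp}=2\,e_W\bigl(\res_W X(e,\phi),\,\sigma(e,\phi)\bigr)=2,
\]
so $(\tilde\sigma^+,\sigma(e,\phi)\otimes(S^+-S^-))_{\wti\Wp}=1\neq 0$, and likewise $(\tilde\sigma^-,\sigma(e,\phi)\otimes(S^+-S^-))_{\wti\Wp}=-1$; in all cases the pairing is nonzero, which is the assertion.

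The step I expect to be the main obstacle is the triangularity input of the first paragraph: one must cite, or extract from the decomposition theorem, the precise statement that $\res_W X(e,\phi)$ equals $\sigma(e,\phi)$ plus a combination of $\sigma(e',\phi')$ supported on \emph{strictly} larger nilpotent orbits, taking care that the normalization of $\sigma(e,\phi)$ fixed in Section \ref{s:intro} (the top-degree homology of $\CB_e$) is the one for which this inversion stays triangular. Everything afterwards is a formal consequence of Proposition \ref{p:isometry} and the vanishing of Ext between $\CH$-modules with distinct central characters.
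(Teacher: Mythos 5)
Your proof is correct and follows a genuinely different route from the one in the paper. Both arguments open the same way, with the Kazhdan--Lusztig unitriangularity of the transition matrix between $\{\res_W X(e',\phi')\}$ and $\{\sigma(e',\phi')\}$ along the closure order. From there the paths diverge. The paper tensors the triangular expansion with $(S^+-S^-)$, applies Propositions \ref{p:d-squared} and \ref{p:index} to constrain the Casimir eigenvalue $a(\tilde\sigma)$ of any $\tilde\sigma$ occurring in $I(X(e',\phi'))$ to equal $h(e')$, and then invokes the empirical fact that $h(e')>h(e)$ whenever $e'>e$ to rule out contributions from the correction terms. You instead invert the triangular system, and kill the correction terms by a central-character argument: $\EP(\CX(e,\phi),\CX(e'',\phi''))=0$ because distinct nilpotent orbits have distinct weighted Dynkin diagrams (Dynkin--Kostant), hence the corresponding $\CH$-modules have distinct central characters and all Ext groups vanish. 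Combined with $\EP(\CX(e,\phi),\CX(e,\phi))=1$ (which is exactly what the proof of Theorem \ref{t:diffeq} establishes, available to you without circularity since it precedes the lemma), this gives $e_W(\res_W X(e,\phi),\sigma(e,\phi))=1$, and your sign-twist computation --- correctly handled in both parities of $\dim V_0$ --- transports this to the nonvanishing statement over $\wti\Wp$, in fact pinning the multiplicity at $\pm1$, slightly more than the lemma asks. The trade-off: the paper's route needs the monotonicity $h(e')>h(e)$ for $e'>e$ (a case-checked fact, but one that also feeds into the identification of $\tilde\sigma^+$ in Lemma \ref{l:c} via \cite{C}), whereas your route replaces it with the softer Dynkin--Kostant injectivity, at the cost of leaning on Theorem \ref{t:diffeq}, which itself carries an empirical case-check. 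So neither argument is free of verification-by-cases, but they apportion the burden differently, and your use of central-character orthogonality is a clean and arguably more conceptual way to isolate the leading term.
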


\begin{proof}
For nilpotent elements $e$ and $e'$ in $\frg$, write $e' > e$ if the closure
of the nilpotent orbit through $e'$ contains $e$.
By the parametrization introduced above, this induces an order on the set of
irreducible tempered representations with real central character and the set of
irreducible representations of $W$.  
With these orders in place, consider the matrix
whose rows are indexed by irreducible tempered modules with real infinitesimal character,
whose columns are indexed by irreducible representations of $W$, and whose content measures
the restriction of a tempered  module to $W$.  The construction of \cite{KL} show
that this matrix is upper triangular with 1's on the diagonal; see the exposition of \cite[Section 3]{c:unip},
for example.  Thus in $\caR(W)$ we may write
\[
\res_W(X(e,\phi))=\sigma(e,\phi)+\sum_{e'>e} b_{e,e'} \res_W(X(e',\phi')),
\]
for integers $b_{e,e'}$.
Thus
\begin{equation}
\label{e:Iformula}
I(X(e,\phi))=\sigma(e,\phi)\otimes (S^+-S^-)+\sum_{e'>e} b_{e,e'} I(X(e',\phi')).
\end{equation}
By Propositions \ref{p:d-squared} and \ref{e:indexformula} applied to $I(X(e,\phi))$, the only irreducible
$\wti \Wp$ representations appearing on the right-hand side of \eqref{e:Iformula}
arise as restrictions of irreducible $\wti W$ representations $\tilde \sigma$ for
which $a(\tilde \sigma) = h(e)$.
By Propositions \ref{p:d-squared} and \ref{e:indexformula} applied to $I(X(e',\phi')$, the 
final sum on the right-hand side of \eqref{e:Iformula} can only contribute 
$\wti \Wp$ modules arising as the irreducible $\wti W$ representations $\tilde \sigma$ for
which $a(\tilde \sigma) = h(e')$ for $e' > e$.
But it is an empirical fact
that $h(e') > h(e)$ whenever
$e'>e.$ The proof is complete.
\end{proof}

\begin{lemma}
\label{l:c}
Assume that $e$ is a quasidistinguished nilpotent element of $\frg$, and recall
the notation of \eqref{e:wp}, \eqref{e:a-omega}, and \eqref{e:he}.   
\begin{enumerate}
\item[(a)]
If $\dim(V_0)$
is odd, there are exactly two inequivalent irreducible $\wti W$ representations $\tilde \sigma^+$
and $\tilde \sigma^- = \tilde \sigma^+ \otimes \sgn$ such that
\begin{equation}
\label{e:codd}
\left (\tilde \sigma^\pm, S^+- S^-\right)_{\wti W} \neq 0
\text{ and }
h(e) = a(\tilde \sigma^\pm).
\end{equation}

\item[(b)]
If $\dim(V_0)$ is even, there is a unique irreducible $\wti W$ representations $\tilde \sigma \simeq 
\tilde \sigma \otimes \sgn$ such that 
\begin{equation}
\label{e:ceven}
\left (\tilde \sigma, S^+ -S^-\right)_{\wti W} \neq 0
\text{ and }
h(e) = a(\tilde \sigma).
\end{equation}
When restricted $\wti \Wp$, $\tilde \sigma$ splits into two inequivalent irreducible
representations $\tilde \sigma^+$ and $\tilde \sigma^-$.
\end{enumerate}
\end{lemma}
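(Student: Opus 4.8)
The plan is to deduce the two parts from the machinery already assembled --- Propositions \ref{p:index} and \ref{p:d-squared}, Lemma \ref{l:diraclwt}, Theorems \ref{t:diff} and \ref{t:diffeq}, and the injectivity of $\sfi$ in \eqref{e:taubar} --- reserving the genuinely explicit ``multiplicity one / smallest Casimir'' description for the case-by-case input of \cite{C}. First I would note that a quasidistinguished $e$ has $I(X(e,\phi))\neq 0$ for at least one $\phi\in\Ahat_0(e)$: when $e$ is distinguished this is immediate from Lemma \ref{l:relation}, taking $\phi=\triv$ so that $X(e,\triv)$ is a discrete series; in general it follows from Reeder's non-vanishing criterion \cite{R} together with Remark \ref{r:indexvanishing}. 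Fixing such a $\phi$, Theorem \ref{t:diffeq} (and hence the conclusion of Theorem \ref{t:diff}) gives $I(X(e,\phi))=\tilde\sigma^+-\tilde\sigma^-$ for a pair of inequivalent genuine irreducible $\wti\Wp$-representations, which will be the $\tilde\sigma^\pm$ of the lemma. Combining Propositions \ref{p:index} and \ref{p:d-squared}, every genuine $\wti\Wp$-constituent of this Dirac index is the restriction of an irreducible $\wti W$-representation on which $\Omega_{\wti W}$ acts by the squared length of the central character of $X(e,\phi)$, i.e.\ by $h(e)$; this yields the equality $h(e)=a(\tilde\sigma^\pm)$ in \eqref{e:codd} and \eqref{e:ceven}.

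Next I would pin down the ``appears in $S^+-S^-$'' and the ``exactly two / unique'' statements. By Lemma \ref{l:diraclwt}, the $\tilde\sigma^\pm$ occurring in $I(X(e,\phi))$ occur in $\sigma(e,\phi)\otimes(S^+-S^-)$, and by the upper-triangularity used in the proof of that lemma, together with the empirical inequality $h(e')>h(e)$ for $e'>e$, they are exactly the genuine constituents of $\sigma(e,\phi)\otimes(S^+-S^-)$ on which $\Omega_{\wti W}$ acts by the minimal possible scalar, that scalar being $h(e)$. Uniqueness of the pair $\{\tilde\sigma^+,\tilde\sigma^-\}$, independently of $\phi$, comes from the one-dimensionality of the elliptic space attached to $e$: for $e$ distinguished it is $\EP(\CX,\CX)=1$ from Lemma \ref{l:relation}, and for $e$ quasidistinguished but not distinguished it is the case-by-case fact, used in the proof of Theorem \ref{t:diffeq}, that $\ol\caR_0(A(e))$ is one-dimensional, so that all the $\res_W(X(e,\phi))$, hence all the $I(X(e,\phi))=\sfi(\res_W(X(e,\phi)))$, agree up to sign in $\ol\caR(W)$. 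Multiplicity one of each $\tilde\sigma^\pm$ then follows from $\bigl(I(X(e,\phi)),I(X(e,\phi))\bigr)_{\wti W}=2$, which forces the expansion of the Dirac index to be $\pm(\tilde\sigma^+-\tilde\sigma^-)$ with no repeated constituents.

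Finally, the odd/even dichotomy separating (a) from (b) is handled exactly as in the proof of Theorem \ref{t:diff}: in the odd case $S^+\otimes\sgn=S^-$ forces $\tilde\sigma^-=\tilde\sigma^+\otimes\sgn$, giving the two inequivalent $\wti W=\wti\Wp$-representations of (a); in the even case the group $H=\wti W/\wti\Wp$ interchanges $S^+$ and $S^-$, hence interchanges $\tilde\sigma^+$ and $\tilde\sigma^-$, which is precisely the assertion that they are the two $\wti\Wp$-constituents of a single irreducible $\wti W$-representation $\tilde\sigma\simeq\tilde\sigma\otimes\sgn$ with $a(\tilde\sigma)=h(e)$.

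The main obstacle, I expect, is not any single computation but the two inputs that this route leaves resting on a case-by-case check: the inequality $h(e')>h(e)$ when $e$ lies in the closure of the orbit of $e'$, and the one-dimensionality of $\ol\caR_0(A(e))$ for quasidistinguished-but-not-distinguished $e$. A secondary point, needed when passing from this lemma to the explicit description in Theorem \ref{t:intro}, is that identifying $\tilde\sigma^+$ with a concrete combinatorial datum (such as $(\sigma\times\emptyset)\otimes S^+$ in type $B_n$) requires importing the tables and procedures of \cite{C} rather than anything intrinsic to the argument above.
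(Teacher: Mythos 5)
Your route is genuinely different from the paper's.  The paper's entire proof consists of two sentences: it invokes Theorem 1.0.1(c) of \cite{C} to obtain, directly, the existence of an irreducible $\wti W$-representation $\tilde\sigma$ with $a(\tilde\sigma)=h(e)$ and its uniqueness up to tensoring with $\sgn$; the odd/even split is then handled with Proposition \ref{p:s-squared} and the parity observation $W_\elli\subset W_\ev\iff\dim V_0$ even.  You instead try to manufacture $\tilde\sigma^\pm$ internally from the Dirac index $I(X(e,\phi))$ via Theorems \ref{t:diff}/\ref{t:diffeq}, identify the Casimir eigenvalue via Propositions \ref{p:index} and \ref{p:d-squared}, and control which constituents can appear via Lemma \ref{l:diraclwt} together with $h(e')>h(e)$ for $e'>e$.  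That machinery is logically available (nothing you invoke depends on Lemma \ref{l:c}), and the existence part of your argument is sound, as is the odd/even dichotomy.

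The gap is in the ``exactly two / unique'' claim.  Your argument establishes that $\{\tilde\sigma^+,\tilde\sigma^-\}$ is the unique pair arising as the Dirac index, and that these are the minimal-$\Omega_{\wti W}$ constituents of $\sigma(e,\phi)\otimes(S^+-S^-)$.  That is a \emph{constructive} characterization internal to one tensor product.  What the lemma (and its use in Theorem \ref{t:maineq}) actually needs, and what Theorem 1.0.1(c) of \cite{C} supplies, is a \emph{global} classification statement: among all genuine irreducible $\wti W$-representations, there is, up to $\sgn$, exactly one with Casimir eigenvalue $h(e)$.  Nothing in the Dirac-index bookkeeping rules out the possibility of some other genuine $\tilde\tau$ with $a(\tilde\tau)=h(e)$ that simply happens not to occur in $\sigma(e,\phi)\otimes(S^+-S^-)$ for any $\phi$; the one-dimensionality of $\ol\caR_0(A(e))$ only pins down the Dirac indices themselves, not the full fiber of the Casimir.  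Without \cite{C} you have therefore proved a weaker assertion than the lemma states.  Put differently: your proposal effectively re-derives the conclusion of Theorem \ref{t:maineq} (that $I(X(e,\phi))=\tilde\sigma^+-\tilde\sigma^-$) in the course of ``proving'' Lemma \ref{l:c}, but the lemma's purpose in the paper is precisely to provide an external, classification-theoretic pin-down of which pair that is --- and that input is irreducibly \cite{C}, not something the Dirac operator alone can deliver.  You flag reliance on \cite{C} only as a ``secondary point'' for the explicit combinatorics; it is in fact the load-bearing step of this lemma.
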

\begin{proof}
Theorem 1.0.1(c) in \cite{C} implies that in either case there exists an irreducible
representation $\tilde \sigma$ of $\wti W$ satisfying \eqref{e:ceven} and, moreover,
$\tilde \sigma$ is unique up to possibly tensoring with $\sgn$.  A short argument
(using Proposition \ref{p:s-squared} and the fact that
$W_\elli \subset W_\ev$ if and only if $\dim(V_0)$ is even) leads
to the two cases indicated in the lemma.
\end{proof}

\begin{theorem}
\label{t:maineq} 
Suppose the parameter function $c$ in the definition of $\bH$ is
constant.  
Let $X=X(e,\phi)$ be an irreducible
tempered $\bH$ module such that the character $\chi_X$ of $\res_W(X)$ does not vanish identically
on $W_\ell$.  (According to \cite{R}, this is equivalent to requiring $e$ to be quasidistinguished.) 
Write  $\chi_{\tilde \sigma^+}$ and $\chi_{\tilde \sigma^-}$ for the characters of the irreducible
$\wti \Wp$ representations given in Lemma \ref{l:c}.  Fix $w \in \Wp_\elli = W_\elli$ and let
$\tilde w$ denote an element of $\wti W$ such that $p(\tilde w)= w$. 
Then, up to sign, 
\begin{equation}
\label{e:maineq}
\chi_X(w) = \frac{\chi_{\tilde \sigma^+}(\tilde w) - \chi_{\tilde \sigma^-}(\tilde w)}{\chi_{S^+}(\tilde w) - \chi_{S^-}(\tilde w)}.
\end{equation}
\end{theorem}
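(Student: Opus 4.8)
The plan is to combine the two halves of the argument developed so far: the ``elliptic representation theory'' side (Theorem \ref{t:diffeq} and the proof of Theorem \ref{t:intro2}) with the ``Dirac operator'' side (Lemmas \ref{l:diraclwt} and \ref{l:c}). First I would invoke Corollary \ref{c:diffeq}: since the character of $\res_W(X(e,\phi))$ does not vanish on $W_\elli$, we have $I(X) \neq 0$, Theorem \ref{t:diffeq} applies, and the conclusion of Theorem \ref{t:diff} holds. This produces inequivalent genuine irreducible $\wti \Wp$ representations $\tilde \tau^+$ and $\tilde \tau^-$ (with $\tilde \tau^- = \tilde \tau^+\otimes\sgn$ in the odd case, and interchanged by $H = \wti W/\wti\Wp$ in the even case) such that $I(X(e,\phi)) = \tilde \tau^+ - \tilde \tau^-$. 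Taking characters and dividing by $\chi_{S^+} - \chi_{S^-}$ (which is nonzero on $W_\elli$ by \eqref{e:spm-intro}) gives
\[
\chi_X(w) = \frac{\chi_{\tilde \tau^+}(\tilde w) - \chi_{\tilde \tau^-}(\tilde w)}{\chi_{S^+}(\tilde w) - \chi_{S^-}(\tilde w)}.
\]
So the content of the theorem is to identify $\{\tilde \tau^+, \tilde\tau^-\}$ (as an unordered pair, which is why the statement only claims equality up to sign) with the pair $\{\tilde \sigma^+,\tilde\sigma^-\}$ from Lemma \ref{l:c}.

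Next I would match the two pairs. Since $\tilde \tau^\pm$ occur in $I(X(e,\phi)) = \res_{\Wp}(X(e,\phi))\otimes(S^+ - S^-)$, each $\tilde\tau^\pm$ satisfies $(\tilde\tau^\pm, X(e,\phi)\otimes(S^+-S^-))_{\wti\Wp} \neq 0$. By Lemma \ref{l:diraclwt}, this forces $(\tilde\tau^\pm, \sigma(e,\phi)\otimes(S^+-S^-))_{\wti\Wp}\neq 0$, and in particular (via the chain of reasoning in the proof of Lemma \ref{l:diraclwt} using Propositions \ref{p:d-squared} and \ref{p:index}, together with the fact that $h(e)$ is the squared length of the central character $\mathsf{r}cs/2$ of $X(e,\phi)$ up to the normalization fixed in \eqref{e:he}) each $\tilde\tau^\pm$ arises as a constituent of the restriction to $\wti\Wp$ of an irreducible $\wti W$ representation $\tilde\sigma$ with $a(\tilde\sigma) = h(e)$ and $(\tilde\sigma, S^+ - S^-)_{\wti W}\neq 0$. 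But Lemma \ref{l:c} says there are \emph{exactly} two such $\wti\Wp$ representations in the odd case (namely $\tilde\sigma^\pm$) and exactly one such $\wti W$ representation in the even case (restricting to $\tilde\sigma^+\oplus\tilde\sigma^-$). In the odd case $\tilde\tau^+ \neq \tilde\tau^-$ so $\{\tilde\tau^+,\tilde\tau^-\} = \{\tilde\sigma^+,\tilde\sigma^-\}$; in the even case $\tilde\tau^+ \neq \tilde\tau^-$ are both constituents of the unique $\tilde\sigma$, so again $\{\tilde\tau^+,\tilde\tau^-\} = \{\tilde\sigma^+,\tilde\sigma^-\}$. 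Since $\chi_{\tilde\sigma^+} - \chi_{\tilde\sigma^-}$ changes sign under swapping the labels, the numerator $\chi_{\tilde\tau^+}(\tilde w) - \chi_{\tilde\tau^-}(\tilde w)$ equals $\pm(\chi_{\tilde\sigma^+}(\tilde w) - \chi_{\tilde\sigma^-}(\tilde w))$, with the sign independent of $w$ and $\tilde w$. This yields \eqref{e:maineq} up to sign.

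I would then remark that the quotient is genuinely independent of the choice of $\tilde w$ above $w$, since $S^\pm$ and $\tilde\sigma^\pm$ are all genuine, so the factor of $-1$ coming from the other lift of $w$ cancels in numerator and denominator. The parenthetical identification with the condition ``$e$ quasidistinguished'' is exactly the statement of \cite{R} already quoted in the introduction and recalled in the proof of Theorem \ref{t:diffeq}, so no further work is needed there.

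The main obstacle is the bookkeeping in the label-matching step: one must be careful that the \emph{unordered} pair $\{\tilde\tau^+, \tilde\tau^-\}$ produced by the Dirac index construction really does coincide with the pair $\{\tilde\sigma^+, \tilde\sigma^-\}$ singled out by Lemma \ref{l:c}, rather than merely being contained in the set of all $\wti\Wp$-constituents with the right central-character scalar. The key point making this work is the \emph{multiplicity/exactness} in Lemma \ref{l:c} (``exactly two'' in the odd case, ``unique $\wti W$ representation'' in the even case), which is where the input from \cite{C} (Theorem 1.0.1(c)) is essential; the inequality $h(e') > h(e)$ for $e' > e$ from Lemma \ref{l:diraclwt} is what guarantees the higher-order terms in \eqref{e:Iformula} cannot contribute spurious constituents at the scalar $h(e)$. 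Once those two facts are in hand, the identification is forced and the rest is the routine division already carried out in the proof of Theorem \ref{t:intro2}.
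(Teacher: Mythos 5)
Your proposal follows the same route as the paper's proof of Theorem \ref{t:maineq}: invoke the decomposition $I(X)=\tilde\tau^+-\tilde\tau^-$, use Lemma \ref{l:diraclwt} plus the Casimir scalar constraint from Proposition \ref{p:d-squared} to locate the constituents, and then use the exactness/uniqueness in Lemma \ref{l:c} to pin them down to $\{\tilde\sigma^+,\tilde\sigma^-\}$; finally divide by $\chi_{S^+}-\chi_{S^-}$. You spell out the pair-matching argument more explicitly than the paper (which leaves it as ``Theorem \ref{t:diff}, Lemma \ref{l:diraclwt}, and Lemma \ref{l:c} imply\ldots''), and you correctly route through Theorem \ref{t:diffeq}/Corollary \ref{c:diffeq} rather than Theorem \ref{t:diff} itself, since $X(e,\phi)$ need not be a discrete series; this is a small but genuine precision improvement over the paper's own citation.
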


\begin{proof}
Theorem \ref{t:diff}, Lemma \ref{l:diraclwt}, and Lemma \ref{l:c} imply that
\[
\res_{W'}(X(e,\phi)) \otimes (S^+ - S^-)  = \tilde \sigma^+ - \tilde \sigma^-
\]
as $\wti \Wp$ representations (for some choice of labeling of $\sigma^\pm$ and $S^\pm$).
Taking characters gives the conclusion of the theorem.
\end{proof}

Since $\chi_{e,\phi} = \chi_{X(e,\phi)}$, Theorem \ref{t:maineq} is equivalent to Theorem \ref{t:intro}.

\ifx\undefined\bysame
\newcommand{\bysame}{\leavevmode\hbox to3em{\hrulefill}\,}
\fi

\end{document}